\theoremstyle{plain}
\newtheorem{thm}{Theorem}
\newtheorem{lem}[thm]{Lemma}
\newtheorem{prop}[thm]{Proposition}
\theoremstyle{definition}
\newtheorem{dfn}[thm]{Definition}
\newtheorem{ex}[thm]{Example}
\theoremstyle{remark}
\newtheorem{rmk}[thm]{Remark}
\newcommand{\cE}{\mathcal{E}}
\newcommand{\cH}{\mathcal{H}}
\newcommand{\cS}{\mathcal{S}}
\newcommand{\veps}{\varepsilon}
\DeclareMathOperator{\uhp}{\mathfrak{H}}
\DeclareMathOperator{\Tr}{Tr}
\DeclareMathOperator{\GL}{GL}
\DeclareMathOperator{\SL}{SL}
\newcommand*{\df}{\mathrel{\vcenter{\baselineskip0.5ex \lineskiplimit0pt
                     \hbox{\scriptsize.}\hbox{\scriptsize.}}} =}
\providecommand{\norm}[1]{\lVert#1\rVert}
\providecommand{\abs}[1]{\left\lvert#1\right\rvert}
\providecommand{\twomat}[4]{\left(\begin{array}{cc}#1&#2\\#3&#4\end{array}\right)}
\providecommand{\stwomat}[4]{\left(\begin{smallmatrix}#1&#2\\#3&#4\end{smallmatrix}\right)}
\providecommand{\pseries}[2]{#1[\![ #2 ]\!]}
\newcommand{\QQ}{\mathbf{Q}}
\newcommand{\CC}{\mathbf{C}}
\newcommand{\ZZ}{\mathbf{Z}}
\newcommand{\PP}{\mathbf{P}}
\newcommand{\RR}{\mathbf{R}}
\newcommand{\FF}{\mathbf{F}}
\providecommand{\floor}[1]{\lfloor #1 \rfloor}
\DeclareMathOperator{\Eis}{Eis}
\DeclareMathOperator{\ddiag}{diag}
\DeclareMathOperator{\PSL}{PSL}
\begin{document}
\title[Hypergeometric series, differential equations and modular forms]{Hypergeometric series, modular linear differential equations, and vector-valued modular forms}
\author[Franc]{Cameron Franc}
\email{cfranc@ucsc.edu}
\address{Department of Mathematics\\
University of California, Santa Cruz}
\author[Mason]{Geoffrey Mason}
\email{gem@ucsc.edu}
\address{Department of Mathematics\\
University of California, Santa Cruz}
\thanks{The second author is supported by the NSF}
\dedicatory{Dedicated to the memory of Marvin Isadore Knopp}
\date{}

\begin{abstract}
We survey the theory of vector-valued modular forms and their
connections with modular differential equations and Fuchsian equations over 
the three-punctured sphere.\ We present a number of numerical examples showing how
the theory in dimensions 2 and 3 leads naturally to close connections between
modular forms and hypergeometric series.

\medskip\noindent
Key words: Vector-valued modular form, Fuchsian differential equation, hypergeometric series.

\medskip \noindent
MSC(2010)
\end{abstract}
\maketitle
\tableofcontents

\setlength{\parskip}{10pt plus 3pt minus 3pt}

\section{Introduction} \label{s:intro}

Renewed interest in a general study of vector-valued modular forms over the last $10$ years was in large part  due to the efforts of Marvin Knopp.\ Beginning with generalized modular forms \cite{KnoMas1}  and soon followed by  general vector-valued modular forms (vvmf) \cite{KnoMas2}, \cite{KnoMas3}, the authors leaned heavily on Knopp's deep and extensive knowledge of the classical German school of modular forms (Hecke, Petersson, Eichler and others) to show that the standard development based on Poincar\'{e} series could be carried through even in the vector-valued setting.\ Knopp also realized that an extension of Eichler cohomology to the setting of generalized modular forms was feasible, and he published several papers on the subject with Wissam Raji \cite{KnoRaj1}, \cite{KnoRaj2}.\footnote{It is interesting that Joseph Lehner is also a coauthor of the first of these papers.\ As Knopp once recounted to the second author, Lehner had long ago found a new approach to the classical Eichler cohomology theory based on an application of Stokes' theorem.\ Much later Knopp found that this method, which had never been published, fit well with the requirements for an Eichler cohomology of gmfs.\ Though Lehner had long since retired, Knopp was in no doubt that he should be a coauthor.} These trends led naturally to \emph{logarithmic} vvmfs and
connections with \emph{modular linear differential equations} (mldes).

This paper explains the relationship between vvmfs and mldes.\ The literature contains many papers dedicated to the study of particular differential equations with modular coefficients, but there seems to be no systematic development.\ Viewing such differential equations through the optic of vvmfs (and vice versa) turns out to be fruitful.\ In this approach, it is the \emph{monodromy} of the mlde that is fundamental.

We now discuss the contents of the present paper.\ It may have been Selberg \cite{Sel} who made the first nontrivial applications of vvmfs, though Gunning's work on Eichler cohomology \cite{Gun} appeared several years before Selberg's paper.\ In Section 2 we briefly discuss both of these developments, and in addition we describe more recent growth estimates for vvmfs whose monodromy representation does not necessarily factor through a finite quotient of the modular group.\ We go on to
explain the connections between vvmfs and differential equations, an idea that likely originated with Poincar\'{e} \cite{Poi}.\ In Section 3 we discuss the so-called free module theorem for vvmfs, explain the connection between mldes and Fuchsian differential equations on the three-punctured sphere, and show how these ideas may be combined to solve the Riemann-Hilbert problem by producing a vvmf with a given monodromy representation of $\SL_2(\ZZ)$.\ It turns out that in dimensions $2$ and $3$ the Fuchsian equations that arise are solved by hypergeometric series.\ This is the general theme of Section 4, and we give a number of detailed explicit numerical examples that illustrate the theory.

Topics that we do \emph{not} discuss in detail include arithmetic aspects of the theory of vector-valued modular forms (cf.\ \cite{GKZ}, \cite{Bor2}, \cite{Bor1}), generalized modular forms, and applications of vvmfs and mldes to conformal field theory (see e.g.\ \cite{KohMas}, \cite{MaMuS}, \cite{Tu}).\ Actually, it was the possibility of applications to conformal field theory that  largely  motivated the recent development of vvmf theory.\ An alternate development that is similarly motivated can be found in the papers of Bantay and Gannon (\cite{BanGan}, \cite{Gan}).

\section{Vector-valued modular forms}
\subsection{Definition and first properties}\label{SSDandP}
Let $\uhp$ denote the complex upper half plane, and let $\Gamma = \SL_2(\ZZ)$ with standard generators
\begin{eqnarray*}
S=\left(\begin{array}{cc}0 & -1 \\1 & 0\end{array}\right),\quad\quad T=\left(\begin{array}{cc}1 & 1 \\0 & 1\end{array}\right).
\end{eqnarray*}
Write $\bar \Gamma = \Gamma/\{\pm 1\}$.\ Let $\rho \colon \Gamma \to \GL_n(\CC)$ denote a finite-dimensional complex representation of $\Gamma$.\ If $F \colon \uhp \to \CC^n$ is a function, then let $F|_k\gamma$ denote the usual stroke operator of weight $k \in \ZZ$, for $\gamma \in \Gamma$.

\begin{dfn}\label{def1}
 A \emph{vector-valued modular form} of weight $k \in \ZZ$ relative to $\rho$ is a function $F \colon \uhp \to \CC^n$ satisfying the following three conditions:
\begin{eqnarray}
&& (a)\ F\ \mbox{is holomorphic}; \notag\\
 && (b)\ F|_k\gamma(\tau) = \rho(\gamma)F(\tau) \ \mbox{for all $\gamma\in \Gamma$}; \label{vvmfdef}\\
&& (c)\ \mbox{$F$ has a $q$-expansion}. \notag
\end{eqnarray}
\end{dfn}
We will frequently use the abbreviation  vvmf to mean \emph{vector-valued modular form}, and use the notation $(F, \rho)$ to denote a vvmf $F$ relative to $\rho$ when we wish to specify the representation.\ A more invariant version of (\ref{vvmfdef}) runs as follows.\ Let $V$ be a left $\CC\Gamma$-module furnishing the representation $\rho$;\ $V$ has a canonical complex structure, and a vvmf $(F, V)$ is a holomorphic map $F: \uhp\rightarrow V$ with $F|_k\gamma(\tau) = \gamma.F(\tau)$.\ We recover (\ref{vvmfdef}) upon choosing a basis for $V$.

The $q$-expansion condition (c) is more intricate than the classical case and requires some explanation.\ A pair $(F, \rho)$ is said to be a \emph{weak} vvmf if only conditions (a) and (b)  hold.\ Write $F$ in coordinates as $F=\ ^{t}(f_1, ..., f_n)$, so that (b) gives $(f_i|_k)\gamma(\tau)=\sum_j \rho(\gamma)_{ij}f_j$ for each $i = 1,\ldots, n$.\ We call the $f_i$ the \emph{components} of $F$.\ The components span a finite-dimensional space $V'$ of holomorphic functions in $\uhp$, and $V'$ is a right $\Gamma$-module with respect to the stroke operator.\ Conversely, given such a $V'$ and any spanning set $f_1,\dots, f_n$ of $V'$, there is a representation
$\rho$ of $\Gamma$ such that $^{t}(f_1, ..., f_n)$ satisfies (a) and (b).\ Two weak vvmfs $(F_1, \rho_1), (F_2, \rho_2)$ are
\emph{equivalent} if there is an invertible $n\times n$ matrix $A$ such that $(AF_1, A\rho_1 A^{-1}) = (F_2, \rho_2)$.\ In particular,
$\rho_1$ and $\rho_2$ are equivalent in the usual sense of representations.\ Given a weak vvmf $(F, \rho)$, by changing to an equivalent vvmf if necessary we may, and shall, assume  that $\rho(T)$ is in \emph{modified} Jordan canonical form, i.e., $\rho(T)$ is a block diagonal matrix with blocks of the form
\begin{eqnarray*}
\left(\begin{array}{cccc} \lambda &  &  &\\ \lambda & \ddots & & \\ & \ddots & \ddots &\\  &  & \lambda&\lambda\end{array}\right).
\end{eqnarray*}
By an argument (see Theorem 2.2 of \cite{KnoMas5}) generalizing the fact that periodic meromorphic functions in $\uhp$ have a $q$-expansion, the components of $F$ corresponding to this block have the form
\begin{eqnarray}\label{logvvmf}
\left(\begin{array}{c}h_1 \\ h_2+\tau h_1 \\ h_3+\tau h_2+ {\tau \choose 2}h_1\\ \vdots \\  h_m+\tau h_{m-1}+\cdots+{\tau \choose m-1}h_1\end{array}\right),
\end{eqnarray}
where $m$ is the block size and each $h_i$ has a (convergent) \emph{ordinary} $q$-expansion 
\begin{eqnarray*}\label{ordqexp}
h_i(\tau)=\sum_{n\in \mathbb{Z}} a_n(i)q^{n+\mu},\ \ \lambda = e^{2\pi i \mu}.
\end{eqnarray*}
If $\mu$ is \emph{real}, then  $F$ is called \emph{meromorphic, holomorphic or cuspidal} at $\infty$ if 
the $q$-expansion for each $h_i(\tau)$ has respectively only finitely many negative powers of $q$, no negative powers of $q$, or only positive powers of $q$.\ For general complex $\mu$ we apply this definition using the real parts $n+Re(\mu)$ of the exponents for each block.\  These definitions are independent of the choice of  $\mu$, which is only defined mod $\ZZ$.\ For example, the meaning of (3) in the definition of a holomorphic vvmf is just that $F$ is holomorphic at $\infty$ in this sense for each block.

The occurrence of $q$-expansions involving powers of $\log q$ is a significant complicating factor in the theory.\ Such $q$-expansions intervene whenever $\rho(T)$ is \emph{not} semisimple.\ In this paper we will be mainly (though not exclusively) concerned with the case when $\rho(T)$ \emph{is} semisimple, for example if $\rho(T)$ has \emph{finite order}, or if $\rho$ is equivalent to a unitary representation.\ In this case, a vvmf is equivalent to one for which the component functions are ordinary $q$-expansions. 

It is not easy to pin down the first appearance of vector-valued modular forms in the history of mathematics.\ The earliest reference to vvmfs that the authors are aware of can be found in Gunning's work \cite{Gun}.\ However, the results of Selberg \cite{Sel} were obtained several years prior to publication and they may predate \cite{Gun}.\ In any case, \cite{Sel} is of particular interest, as it contains a nontrivial application of vvmfs to the study of classical scalar modular forms.\ In this work Selberg gives the uniform estimate  $O(n^{\frac k2-\frac 15})$ for the Fourier coefficients of weight $k$ cusp-forms on any finite index subgroup 
$\Gamma'$ of $\Gamma$, regardless of whether $\Gamma'$ is a congruence subgroup or not.\ The method takes such a scalar modular form on $\Gamma'$ and applies the stroke operator with respect to coset representatives for $\Gamma'$ in $\Gamma$ to get a finite collection of translated functions.\ These translates form the components of a vvmf for the full modular group $\Gamma$, and Selberg succeeded in applying the Rankin-Selberg method to bound the Fourier coefficients of such a vvmf.\ Selberg's result improved the classical work of Hecke establishing estimates $O(n^{k/2})$ and $O(n^{k-1})$ for scalar cusp-forms and holomorphic forms respectively ($k\geq 3$).

Around the same time, Eichler, Shimura and others began a study of a related class of automorphic vector-valued functions (cf.\ \cite{Eic1}, \cite{Gun}, \cite{KugShi}, \cite{MatMur}, \cite{MatShi}, \cite{Shi1},  etc).\ One of their goals was to develop tools for computing dimension and trace formulas for spaces of scalar-valued modular forms.\ To briefly explain the occurence of vvmfs in these works, let $G \subseteq \SL_2(\RR)$ denote a discrete subgroup of finite covolume, and let $M_n \colon G \to \GL_{n+1}(\CC)$ denote the $n$th symmetric power of the standard representation of $G$.\ Write
\[
  L_n(z) = M_n\twomat 1z01 = \left(\begin{matrix}
1 & nz & \frac{n(n-1)}{2}z^2 & \cdots & z^n\\
0 & 1 & (n-1)z & \cdots & z^{n-1}\\
  & \vdots &&&\vdots\\
0 & 0 &0&\cdots &1.  
\end{matrix}
  \right)
\]
In \cite{KugShi}, an \emph{$M_n$-vector with respect to} $G$ is defined to be an $(n+1)$-tuple $F = (F_i)$ of functions on $\uhp$ satisfying three properties V1, V2, V3. The first two are easy to state:
\begin{enumerate}
\item[(V1)] each $F_i$ is meromorphic;
\item[(V2)] for every $g \in G$, one has $F\circ g = M_n(g)g$.
\end{enumerate}
The third condition is a little more technical.\ To state it, let $s \in \PP^1(\RR)$ be a cusp of $G$, set $g = \stwomat{-s}{1}{-1}{0}$ if $s \neq \infty$, and otherwise set $g = \stwomat 1001$.\ If $F$ satisfies V1 and V2 then there exist $n+1$ functions $f_0(q),\ldots, f_n(q)$ meromorphic in $0 < \abs q < 1$ such that
\[
  L_n(z)^{-1}M_n(g)^{-1}(F\circ g) = \left(\begin{matrix}
  f_0(q)\\ \vdots\\ f_n(q)
  \end{matrix}
  \right).
\]
The third condition for $F$ to be an $M_n$-vector is that
\begin{enumerate}
\item[(V3)] for every cusp $s$ of $G$, the functions $f_k(q)$ are meromorphic at $q = 0$.
\end{enumerate}
When $G = \SL_2(\ZZ)$, the definition of an $M_n$-vector for $\SL_2(\ZZ)$ is a special case of the definition of a (meromorphic)  vvmf of weight $0$ as defined above.
\begin{ex}
  An important case to keep in mind is that studied by Eichler \cite{Eic1} and Shimura \cite{Shi1}.\ Let $f$ denote a scalar cusp-form of weight $k\geq 2$ on $\Gamma_0(N)$. Then the vector
\[
  F_f \df L_n(z)\left(\begin{matrix}
  0\\\vdots \\ 0\\ f(z)
\end{matrix}
  \right) = \left(\begin{matrix}
  z^{k-2}f(z)\\\vdots \\ zf(z)\\ f(z)
\end{matrix}
  \right)
\]
is an $M_{n}$-vector for $\SL_2(\ZZ)$, where $n = k-2$.\ Equivalently, $F_f$ defines a vvmf for $M_n$ of weight $0$.\ Note that in this case $\rho(T)$ is not diagonalizable and $F_f$ has a $\log q$-expansion (since $2\pi i z = \log q$).\ By integrating the real part of $F_f$ over parabolic cycles in $\uhp$, Eichler in the case $k = 2$, and Shimura in general, established an isomorphism $S_k(\Gamma_0(N)) \cong H^1(\Gamma_0(N),M_n)$ of real vector spaces.\ Here, $S_k(\Gamma_0(N))$ denotes complex vector space of cusp-forms for $\Gamma_0(N)$ of weight $k$ and $H^1(\Gamma_0(N),M_n)$ denotes the group cohomology of $\Gamma_0(N)$ with values in the representation $M_n$.
\end{ex}

More than forty years elapsed between the introduction of vvmfs into mathematics and the general study of vvmfs initiated by \cite{KnoMas2} and \cite{KnoMas3}.\ In \cite{Sel} the notion of vvmf had been confined to representations of $\Gamma$ with finite image.\ The work \cite{KnoMas2} expanded the definition of vvmf to cover all finite-dimensional representations of $\Gamma$, and the authors showed that Hecke's method for obtaining bounds on the growth of Fourier coefficients, when combined with results of Eichler \cite{Eic2}, can be applied to this broader class of vvmfs:
\begin{thm}
\label{t:fourierbound}
Let $(F, \rho)$ be a vector-valued modular form of weight $k$.\ There is a nonnegative constant 
$\alpha$, depending only on $\rho$, such that if $f_i(q) = \sum_{n \geq 0} a_n(i)q_N^n$ is the Fourier expansion of the $i$th coordinate of $F$, then $a_n(i) = O(n^{k+2\alpha})$.\ If $F$ is cuspidal then $a_n(i) = O(n^{\frac{k}{2}+\alpha})$.
\end{thm}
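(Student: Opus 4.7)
The plan is to adapt Hecke's classical contour-integration argument from the scalar to the vector-valued setting. The two stages are: first, obtain a pointwise majorization $\|F(\tau)\|\ll y^{-\beta}$ on $\uhp$; second, extract a Fourier-coefficient estimate by integrating each component $f_i$ along a horizontal segment at optimally chosen height. The essential new input for the vector-valued case is a polynomial growth estimate for the representation, due to Eichler \cite{Eic2}: there exist constants $\alpha=\alpha(\rho)\ge 0$ and $C>0$ such that $\|\rho(\gamma)\|_{op}\le C\|\gamma\|_{op}^{2\alpha}$ for every $\gamma\in\Gamma$. Since every $\gamma\in\SL_2(\RR)$ has singular values of the form $(\sigma,\sigma^{-1})$, the same bound applies to $\rho(\gamma)^{-1}$.

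To estimate $\|F(\tau)\|$ for $\tau=x+iy\in\uhp$ with $y$ small, choose $\gamma=\stwomat abcd\in\Gamma$ with $\tau'=\gamma\tau\in\cF$, the standard fundamental domain. The modular transformation law gives
\[
F(\tau)=(c\tau+d)^{-k}\rho(\gamma)^{-1}F(\tau'),
\]
and three observations combine to control the right-hand side: (a) the $q$-expansion \eqref{logvvmf} shows that $\|F(\tau')\|$ is bounded as $y'=\operatorname{Im}(\tau')\to\infty$ in the holomorphic case and decays like $e^{-c y'}$ in the cuspidal case, up to polynomial factors in $y'$ from the $\tau$-terms when $\rho(T)$ is not semisimple; (b) any $\gamma$ mapping a point with small $y$ into $\cF$ must have $c\neq 0$, so $|c\tau+d|^{2}\ge c^{2}y^{2}\ge y^{2}$, giving $|c\tau+d|^{-k}\ll y^{-k}$; (c) a direct analysis of the entries of the optimal $\gamma$ yields $\|\gamma\|_{op}^{2}\ll 1/y$, so Eichler's bound gives $\|\rho(\gamma)^{-1}\|_{op}\ll y^{-\alpha}$. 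Combining (a)--(c), and allowing a harmless rescaling of $\alpha$ depending only on $\rho$, one obtains
\[
\|F(\tau)\|\ll y^{-(k+2\alpha)}\quad\text{(holomorphic case)},\qquad \|F(\tau)\|\ll y^{-(k/2+\alpha)}\quad\text{(cuspidal case)},
\]
where in the cuspidal case the exponential decay of $\|F(\tau')\|$ from (a) absorbs part of the polynomial growth from (b) and (c).

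With the pointwise bounds in hand, standard contour integration finishes the proof. Writing $a_n(i)\,e^{-2\pi ny/N}=\tfrac{1}{N}\int_0^N f_i(x+iy)\,e^{-2\pi i nx/N}\,dx$ (with $N$ chosen so each component has a $q_N$-expansion), bounding the integrand by the pointwise estimate, and optimizing $y\sim 1/n$ yields $|a_n(i)|=O(n^{k+2\alpha})$ and $O(n^{k/2+\alpha})$ respectively. The main obstacle of the argument is Eichler's polynomial bound itself: because $\rho(T)$ may be non-trivially unipotent, $\|\rho(T^n)\|_{op}$ already grows polynomially in $n$ with exponent equal to one less than the largest Jordan block size of $\rho(T)$. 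One proves the general bound by expressing an arbitrary $\gamma$ as a word in $S$ and powers of $T$ whose lengths and exponents are controlled by the matrix entries of $\gamma$, then bounding each factor using the explicit structure of $\rho(S)$ and $\rho(T)$.
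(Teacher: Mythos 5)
Your proposal follows exactly the route the paper itself indicates for this theorem (which it states without proof, citing Knopp--Mason \cite{KnoMas2}): Hecke's pointwise-bound-plus-contour-integration argument, with the single new ingredient being Eichler's polynomial growth estimate $\norm{\rho(\gamma)}\ll\norm{\gamma}^{2\alpha}$, obtained from the continued-fraction expression of $\gamma$ as a word in $S$ and powers of $T$ of logarithmic length. So the architecture is the right one and matches the source.

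There is, however, one false intermediate claim in step (c): it is not true that the optimal $\gamma$ satisfies $\norm{\gamma}_{op}^2\ll 1/y$. Take $\tau=y+iy$; the matrix carrying $\tau$ into $\cF$ is $\gamma=\stwomat{a}{-1}{1}{0}$ with $a$ the nearest integer to $\mathrm{Re}(1/\tau)=1/(2y)$, so $\norm{\gamma}_{op}\asymp 1/y$ and $\norm{\gamma}_{op}^2\asymp 1/y^2$. The correct uniform statement (for $x$ in a bounded interval) is $\norm{\gamma}_{op}^2\ll y'/y$ where $y'=\mathrm{Im}(\gamma\tau)\leq 1/y$, so in general one only gets $\norm{\rho(\gamma)^{-1}}\ll y^{-2\alpha}$, not $y^{-\alpha}$. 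This refined bound is in fact exactly what makes the two exponents in the theorem come out with the \emph{same} $\alpha$: in the holomorphic case the worst case $y'\asymp 1/y$ gives $\norm{F(\tau)}\ll y^{-(k+2\alpha)}$ directly, while in the cuspidal case the factor $e^{-cy'}$ confines the worst case to bounded $y'$, where $\norm{\gamma}_{op}^2\ll 1/y$ \emph{does} hold and one gets $y^{-(k/2+\alpha)}$. Since the theorem leaves $\alpha$ unspecified, your ``harmless rescaling of $\alpha$'' papers over the slip and the statement survives; but as written your (c) would yield the two estimates with \emph{different} constants in place of a single $\alpha$, and the error would also corrupt any attempt at the explicit bound $\alpha\leq 0.345\cdot\log\norm{\rho(S)}$ recorded in the remark following the theorem. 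Replacing ``$\norm{\gamma}_{op}^2\ll 1/y$'' by ``$\norm{\gamma}_{op}^2\ll y'/y$'' repairs the argument completely.
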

\begin{rmk}
Using an estimate of Eichler (loc.\ cit.) and the arguments in \cite{KnoMas2}, one can show that $\alpha \leq 0.345 \cdot\log\norm{\rho(S)}$ where
 for a matrix $A$ we define $\norm A\df \max_i \sum_j \abs{A_{ij}}$.\ If $\rho$ is unitarizable (e.g. if the image of $\rho$ is finite) one can take $\alpha$ to be zero.\ On the other hand, as we shall explain in Section \ref{SSMW}, there is no upper bound on $\alpha$ that applies uniformly for all $\rho$; one can find $\rho$ of any dimension $n$ for which the best-possible $\alpha$ is no less than $(n-2)/2$.
\end{rmk}

While the consistency of Theorem \ref{t:fourierbound} with the scalar case is appealing, when it first appeared it was not known that  nonzero vvmfs
exist for arbitrary representations.\ In order to address this, and to prove stronger structural results, the paper \cite{KnoMas3} restricts to studying vvmfs corresponding to representations $\rho$ such that $\rho(T)$ is of finite order\footnote{The paper \cite{KnoMas3} allows for a nontrivial multiplier, just as Selberg does in \cite{Sel}, but we will omit this from the discussion.}. 

Assume now that $\rho(T)$ is of finite order.\ Let $\cH_k(\rho)$ denote the space of holomorphic forms of weight $k$ associated to such a representation $\rho$ and let $\cS_k(\rho)$ denote the subspace of cusp-forms.\ Among other things, in \cite{KnoMas3} the authors show that $\cH_k(\rho)$ is finite-dimensional, and that $\cH_k(\rho) = 0$ if $k < -2\alpha$ where $\alpha$ is as in Theorem \ref{t:fourierbound}.\ The essential ingredients are the use of vector-valued Poincar\'e series associated to $\rho$ and a vector-valued version of the Petersson pairing.\ The authors show that for weights $k>2+2\alpha$, the Poincare series span $\cS_k(\rho)$, and that the difference between $\cS_k(\rho)$ and $\cH_k(\rho)$ is accounted for by suitably defined vector-valued Eisenstein series.\ Furthermore, for the same range of $k$, Theorem \ref{t:fourierbound} can be improved to obtain the classical estimate $a_n(i)=O(n^{k-1})$ for Fourier coefficients in the holomorphic case.\ Thus for large enough weights, the gross structure of (nonlogarithmic) holomorphic vector-valued modular forms is completely parallel to the classical case.

\begin{rmk}
 Recent work of Gannon \cite{Gan} has expanded the scope of many of these results to cover a far broader class of finite dimensional representations of $\SL_2(\ZZ)$. The essential idea is the use of the solution to the classical Riemann-Hilbert problem to obtain the existence of vvmfs.
\end{rmk}

\subsection{Vector-valued modular forms and the Riemann-Hilbert problem}
Let 
\begin{equation}
\label{eq:ode}
  L(f) = \frac{df^n}{dz^n} + a_1(z) \frac{df^{n-1}}{dz^{n-1}} + \cdots + a_n(z)f = 0
\end{equation}
denote a differential equation on $\PP^1(\CC)$, where each $a_i(z) \in \CC(z)$ is a rational function.\ Assume that the coefficients $a_i(z)$ share no common factor $(z-a)$ for $a \in \CC$.\ Let $S = \{s_1,\ldots, s_{t+1}\} \subseteq \PP^1(\CC)$ denote the finite set of singularities appearing in the coefficients $a_i(z)$.
\begin{dfn}
  A point $a \in \CC$ is said to be a \emph{regular singularity} of equation (\ref{eq:ode}) if the coefficient $a_i(z)$ has a pole of order at most $i$ at $a$ for $i = 1,\ldots, n$.\ The point $\infty \in \PP^1(\CC)$ is said to be a \emph{regular singularity} of (\ref{eq:ode}) if, after writing the equation in terms of $w = 1/z$, the result has $0$ as a regular singularity.\ The equation (\ref{eq:ode}) is said to be \emph{Fuchsian} if it has all points of $\PP^1(\CC)$ as regular singularities.
\end{dfn}
Note that (\ref{eq:ode}) is trivially regular singular at all points of $X = \PP^1(\CC) - S$. Assume that $t \geq 2$ and that $s_{t+1} = \infty$.\ The universal covering space of $X$ may be identified with the upper half-plane $\uhp$.\ Let $\pi \colon \uhp \to X$ denote the projection.\ Let $\pi_1(X,a)$ denote the topological fundamental group of $X$ with fixed basepoint $a \in X$, and let $G$ denote the (faithful) image of $\pi_1(X,a)$ in the automorphism group of $\uhp$, so that $X$ is conformally equivalent with $\uhp/G$.

Since (\ref{eq:ode}) has no singularity at $z = a$, Cauchy's thereom yields a local basis $F = (f_j)$ of solutions to (\ref{eq:ode}). Let $V = \CC^n$ be the $\CC$-span of the coordinates of $F$. By analytic continuation, the elements of $V$ may be regarded as holomorphic functions on $\uhp$.\ The analytic continuation of elements of $V$ about loops in $X$ defines the \emph{monodromy representation} $\rho \colon \pi_1(X,a) \to \GL_n(\CC)$.\ The vector of solutions $F$ satisfies the identity
\[
  F(\gamma z ) = \rho(\gamma) F(z),\quad\quad z \in \uhp,~ \gamma \in \pi_1(X,a),
\]
by definition of the monodromy representation.\ This resembles the definition of a vvmf of weight zero, but where $\Gamma$ is replaced by the fundamental group of $X$. Poincar\'e called such functions \emph{zetafuchsian systems} (cf. \cite{Poi} or \cite{SinTre})\footnote{More recently Stiller \cite{Sti} and others (e.g. \cite{Min}) have studied generalized automorphic forms defined relative to quite general monodromy representations}.

This paper focuses on the case when $S = \{0,1,\infty\}$.\ Let $\cE \subseteq \uhp$ denote the set of elliptic points for $\Gamma$.\ The automorphic function $K = 1728/j$ for $\Gamma$ defines a covering map of $\uhp-\cE$ onto $X = \PP^1 - \{0,1,\infty\}$.\ As a meromorphic function on the extended upper half plane, $K$ maps the cusps to $0$, the orbit $\Gamma i$ to $1$, and the orbit $\Gamma \rho$, where $\rho = e^{2\pi i/3}$, to $\infty$.\ The universal property of the universal covering space yields a commutative diagram
\[\xymatrix{
\uhp \ar[dd]_\pi\ar[dr]^p&\\
&\uhp - \cE\ar[dl]^K\\
X
}\]
of covering spaces.\ Let $\gamma_1$ and $\gamma_\infty$ in $\pi_1(X,a)$ denote the generating loops passing around $1$ and $\infty$, respectively.\ The loops $\gamma_1^2$ and $\gamma_\infty^3$ lift to closed loops about some elliptic point in $\uhp - \cE$, and they thus act trivially on this cover.\ The quotient $\pi_1(X,a)/\langle \gamma_1^2,\gamma_\infty^3\rangle$ is isomorphic with $\bar\Gamma$, and one sees that zetafuchsian systems whose monodromy factors through the diagram above give rise to vector-valued modular forms of weight $0$ (although possibly with bad behaviour at cusps and elliptic points).

This discussion proceeded from a differential equation and produced a monodromy representation.\
Suppose that we begin instead with a representation of $\bar\Gamma$, or more generally of the fundamental group of projective space deprived of some points.\ The question of realizing representations as the monodromy of a Fuchsian differential equation is the \emph{Riemann-Hilbert problem} (Hilbert's twenty-first problem).\ If one allows matrix differential equations with arbitrary singularities, then the fact that all such representations can be so realized follows from the fact that every holomorphic vector bundle on a noncompact Riemann surface is trivial.\ With a little more care one can show that every representation can be realized by a Fuchsian matrix differential equation (Theorem 31.5, \cite{For}).\ Given such a matrix differential equation, one may choose a cyclic vector to obtain a differential equation of the form (\ref{eq:ode}) with the same monodromy.\ However, even if one begins with a Fuchsian matrix system on $X$, the process of choosing a cyclic vector may introduce new \emph{apparent} singularities.\ These are singularities of the coefficients of the equation that are not singularities of the solutions to the equation about the ostensibly singular points.\ Plemlj \cite{Ple} showed that one can always realize a given representation as the monodromy of a differential equation (\ref{eq:ode}) regular on $X$ and with all but at most one of the points of $S$ as regular singularities of the equation.\ Plemlj claimed that the last possible singularity could be made regular, but Bolibrukh \cite{Bol1} gave a counterexample to this.\ Bolibrukh \cite{Bol2} and Kostov \cite{Kos} proved independently that irreducible representations can always be realized as the monodromy of a Fuchsian equation.

The works \cite{BanGan} and \cite{Gan} take the perspective of realizing vvmfs of weight $0$ as solutions to Fuchsian matrix differential equations on $\PP^1 - \{0,1,\infty\}$.\ In this way Gannon \cite{Gan} has used the solution to the Riemann-Hilbert problem to prove the existence, and extend much of the theory, of vector-valued modular forms to arbitrary representations of $\Gamma$.\ Additionally, the work \cite{Gan} further extends the theory to handle representations of other genus-zero Fuchsian groups.

In the final two sections of this paper we take an opposite perspective: we will show how a structural result for vvmfs (Theorem \ref{t:freemodule} on page 10) can be used to easily solve the Riemann-Hilbert problem for representations of $\Gamma$ of low degree.\ Several interesting examples involving generalized hypergeometric series will be discussed.\ Before turning to this, however, we briefly recall some facts about ordinary differential equations.

\begin{rmk}
  Several authors (e.g. \cite{PutUlm}, \cite{CreHaj1}, \cite{CreHaj2}) have studied the problem of computing differential equations with prescribed monodromy.
\end{rmk}

\subsection{The Frobenius method for solving Fuchsian equations}
\label{ss:frobenius}
The classical Frobenius method for solving linear equations allows one to find solutions to (\ref{eq:ode}) about regular singular points. For simplicity assume that $z = 0$ is a regular singular point of (\ref{eq:ode}) and proceed as follows: we seek a meromorphic and possibly multivalued solution of the form
\[
  f(z) = \sum_{m \geq 0}a_mz^{m+r}
\]
where $a_0 \neq 0$ and $r$ is some parameter for which we will solve. Since $0$ is a regular singularity of (\ref{eq:ode}), in a neighbourhood of $0$ we may write (\ref{eq:ode}) in the form
\[
  z^n\frac{df^n}{dz^n} + \left(\sum_{m \geq 0} b_{m,1}z^m\right)z^{n-1}\frac{df^{n-1}}{dz^{n-1}} + \cdots +\left(\sum_{m \geq 0} b_{m,n}z^m\right)f = 0,
\]
where $b_{0,j} \neq 0$ for $j = 1,\ldots, n$. Substituting $f(z)$ into this equation and considering the coefficient of $z^r$ in the resulting expression produces the equation:
\[
\frac{(r+n)!}{r!} + b_{0,1}\frac{(r+n-1)!}{r!} + b_{0,2}\frac{(r+n-2)!}{r!}+  \cdots + b_{0,n} = 0.
\]
The left side of this expression is in fact a polynomial in $r$, called the \emph{indicial polynomial} of (\ref{eq:ode}) near $z = 0$.\ If all of the roots $r_1,\ldots, r_n$ of the indicial polynomial are distinct, and no two differ by an integer, then it is possible to solve for $n$ distinct (though possibly multivalued) solutions to (\ref{eq:ode}) near $z = 0$ of the form $f_j(z) = z^{r_j}g_j(z)$, where $g_j(z)$ is holomorphic and single-valued near $0$.\ The local monodromy around $z = 0$ acts on  $f_j$ by mutiplication by $e^{2 \pi i r_j}$.

The indicial polynomial takes a simpler form if one expresses (\ref{eq:ode}) in terms of the derivation $\theta_z = z\frac{d}{dz}$. If
\[
  \theta_z^nf + g_1\theta^{n-1}_zf + \cdots g_nf = 0
\]
denotes (\ref{eq:ode}) in terms of this new derivation, then regularity at $z = 0$ is equivalent to having $g_j \in \pseries{\CC}{z}$ for all $j$. The indicial polynomial of (\ref{eq:ode}) near $z = 0$ is then simply $r^n + g_1(0)r^{n-1} + \cdots + g_n(0)$. 

The monodromy around $z = 0$ of the differential equations that arise below will be finite, and thus the indicial roots $r_j$ will be rational numbers.\ While the local monodromy only depends on the indicial roots $r_j$ mod $\ZZ$, in general one cannot always assume that the indicial roots lie between $0$ and $1$.\ In the examples below, however, it will be the case that the indicial roots lie between $0$ and $1$.

\section{The free-module theorem and modular linear differential equations}
\label{s:mldes}
\subsection{The free-module theorem}
\label{ss:freemod}
For each even integer $k \geq 4$, let $E_k$ denote the holomorphic Eisenstein series for $\Gamma$ of weight $k$ with the $q$-expansion
\[
  E_k(q) = 1 + \frac{2}{\zeta(1-k)}\sum_{n \geq 1} \sigma_{k-1}(n)q^n.
\]
The quasi-modular Eisenstein series $E_2$ is defined as a $q$-series in the same way. While $E_2$ is not a genuine modular form, its appearance in the modular derivative
\[
  D_k := q\frac{d}{dq} - \frac{kE_2}{12}\quad (k \in \ZZ)
\]
ensures that $D_k$ preserves modular forms (while increasing the weight by $2$). Indeed, for holomorphic functions in $\uhp$ we have
\begin{eqnarray}\label{eq:Dcommstroke}
(D_kf)|_{k+2}\gamma = D(f|_k \gamma)\ \ (\gamma \in \Gamma).
\end{eqnarray}

 Let $D_k^{(0)}$ denote the identity map on functions, and for $n \geq 1$ define
\[
  D_k^{(n)} := D_{k+2(n-1)}\circ D_k^{n-1}=D_{k+2(n-1)} \circ D_{k+2(n-2)} \circ \cdots \circ D_k.
\]
When there is no possibility for confusion, we will abuse notation and write $D = D_k$ and $D^n = D_k^{(n)}$. Ramanujan observed that
\begin{equation}
\label{eq:eisderivs}
D_2(E_2) = -\frac{E_4}{12},\quad D_4(E_4) = -\frac{E_6}{3},\quad\quad D_6(E_6) = -\frac{E_4^2}{2}.
\end{equation}

Because $D$ is a graded derivation of weight $2$ of the graded ring of modular forms for $\Gamma$, we may construct the graded (associative, noncommutative)  \emph{skew-polynomial ring} $R = \CC[E_4,E_6]\langle D\rangle$ formally obtained by adjoining $D$ to $\CC[E_4, E_6]$. Elements of $R$ are formal polynomials $\sum_i f_iD^i$
 with coefficients in $\CC[E_4, E_6]$. They are multiplied using the identity $Df-fD:= D_k(f)$, where $f$ is a scalar form of weight $k$. Ramanujan's identities (\ref{eq:eisderivs}) translate to the following commutation rules in $R$:
\begin{equation}\label{eq:commrules}
  [D,E_4] = -\frac{E_6}{3},\quad [D,E_6] = -\frac{E_4^2}{2}.
\end{equation}

If $\rho$ is a finite-dimensional representation of $\Gamma$, let $\cH(\rho) = \bigoplus_k \cH_k(\rho)$ denote the direct sum of the spaces of vvmfs for $\rho$ of varying weight. Coordinatewise multiplication makes $\cH(\rho)$ a graded $R$-module. In \cite{MarMas} one finds the following result about the structure of this module.
\begin{thm}[The free-module theorem]
\label{t:freemodule}
Suppose that $\rho$ is a complex representation of $\Gamma$ of dimension $n$ such that $\rho(T)$ is conjugate to a unitary matrix.\ Then $\cH(\rho)$ is a free $\CC[E_4,E_6]$-module of rank $n$.
\end{thm}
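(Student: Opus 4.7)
The strategy is to recognize $R := \CC[E_4, E_6]$ as a two-dimensional graded polynomial ring and to exhibit $\cH(\rho)$ as a Cohen-Macaulay $R$-module, whence freeness will follow from the graded Auslander-Buchsbaum formula.

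First I would verify that $\cH(\rho)$ is a finitely generated graded torsion-free $R$-module. Torsion-freeness is immediate: if $fF=0$ with $f\in R\setminus\{0\}$, then $F$ vanishes on $\uhp$ away from the discrete zero set of $f$, so $F\equiv 0$ by analyticity. Because $\rho(T)$ is unitary, every $F\in\cH(\rho)$ has ordinary $q$-expansions, and Theorem \ref{t:fourierbound} together with the arguments of \cite{KnoMas3} yield $\cH_k(\rho)=0$ for $k\ll 0$ and $\dim \cH_k(\rho)=O(k)$. Combined with the observation that $\cH(\rho)\otimes_R K$ has dimension at most $n=\dim\rho$ over $K:=\Frac(R)$ (since the components of any vvmf for $\rho$ span a $\Gamma$-invariant space of dimension at most $n$), standard Noetherian arguments then produce finitely many homogeneous generators.

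The crux is to show that $E_4,E_6$ is a regular sequence on $\cH(\rho)$. Multiplication by $E_4$ is injective since $E_4$ is a nonzero holomorphic function. To see that $E_6$ is injective on $\cH(\rho)/E_4\cH(\rho)$, suppose $E_6F = E_4G$ in $\cH(\rho)$. The geometric input is that, on $\uhp$, $E_4$ vanishes exactly on the $\Gamma$-orbit of $\omega:=e^{2\pi i/3}$ to first order, $E_6$ vanishes exactly on the $\Gamma$-orbit of $i$ to first order, and these orbits are disjoint. Local analysis at the zeros of $E_4$, where $E_6\neq 0$ forces $F$ to vanish, shows that $H:=F/E_4$ extends to a holomorphic function on all of $\uhp$. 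Because $E_4$ is a scalar modular form, $H$ automatically transforms correctly under $\rho$ at weight $k-4$ and inherits a well-behaved $q$-expansion from $F$, so $H\in\cH(\rho)$ and $F\in E_4\cH(\rho)$, as required.

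Given the regular sequence, $\cH(\rho)$ has depth $2$ over the regular ring $R$, and the graded Auslander-Buchsbaum formula delivers freeness. For the rank, the Hilbert series of any free $R$-module takes the form $\sum_i t^{k_i}/((1-t^4)(1-t^6))$, so the rank is visible from the numerator evaluated at $t=1$. An independent asymptotic $\dim\cH_k(\rho)\sim nk/12$ as $k\to\infty$ --- obtained from the sharpened bound $a_n(i)=O(n^{k-1})$ quoted after Theorem \ref{t:fourierbound} combined with the Poincar\'e/Eisenstein decomposition of \cite{KnoMas3}, or from a Riemann-Roch count on the modular curve regarded as a flat rank-$n$ vector bundle --- then identifies the rank with $n=\dim\rho$. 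The main obstacle I expect is exactly this last step: the Cohen-Macaulay half is a formal consequence of the favorable zero geometry of $E_4$ and $E_6$, but matching the rank with $\dim\rho$ requires genuine analytic input on the existence of sufficiently many independent vvmfs for $\rho$, essentially the content of the Riemann-Hilbert construction outlined in Section 2.2.
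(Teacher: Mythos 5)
The paper does not actually prove Theorem \ref{t:freemodule}; it quotes it from \cite{MarMas} and, in the remark following it, describes the architecture of that proof: (i) show $\cH(\rho)$ is finitely generated over $\CC[E_4,E_6]$, and then (ii) exploit the $\CC[E_4,E_6]\langle D\rangle$-module structure, in analogy with the fact that a Weyl-algebra module finitely generated over the polynomial ring is automatically free (a module with a connection is locally free, and graded projective is graded free). Your route to step (ii) is genuinely different and, I think, correct: you replace the $D$-module argument by a depth computation. Your verification that $E_4,E_6$ is a regular sequence on $\cH(\rho)$ is sound --- $E_4$ and $E_6$ have simple zeros on $\uhp$ supported on the disjoint orbits $\Gamma\omega$ and $\Gamma i$, $1/E_4\in 1+q\CC\pseries{}{q}$ preserves holomorphy at $\infty$, and $\rho(T)$ semisimple rules out logarithmic complications --- so depth $2$ plus graded Auslander--Buchsbaum gives freeness once finite generation is in hand. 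This buys a more elementary, purely commutative-algebraic second half; what it loses is that it makes no use of $D$ at all, whereas in \cite{MarMas} the differential structure is also what powers step (i).

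That is where the gap lies. Finite generation is exactly the step the paper's remark singles out as the key difficulty, and your justification --- ``$\dim\cH_k(\rho)=O(k)$, generic rank at most $n$, standard Noetherian arguments'' --- does not close it. A graded torsion-free $R$-submodule of $\Frac(R)^n$, bounded below in degree and with Hilbert function growing linearly, is not automatically finitely generated: comparing with a free submodule $\cF$ of rank $n$ only shows that $\cH(\rho)/\cF$ is a torsion module with bounded graded pieces, and such modules (e.g.\ $\bigoplus_{j\ge 0}(R/\frakm)(-j)$) need not be finitely generated. One must use more structure, e.g.\ that every element of $\cH(\rho)$ satisfies an MLDE of order at most $n$, or that $\cH(\rho)/\cF$ is annihilated by a fixed power of $\Delta$ coming from the modular Wronskian, to pin down finite generation. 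The same comment applies to the rank: you correctly flag that rank exactly $n$ requires existence of enough independent forms; under the hypothesis that $\rho(T)$ is unitarizable this comes from the Poincar\'e and Eisenstein series of \cite{KnoMas3} together with the Wronskian argument of Section \ref{SSMW} (producing $F,DF,\dots,D^{n-1}F$ independent over $\Frac(R)$ when $\rho$ is irreducible), rather than from the Riemann--Hilbert machinery, which \cite{Gan} uses for the general case. With those two inputs supplied, your argument would constitute a legitimate alternative proof.
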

 Section \ref{s:hypergeometric} explains how this theorem can be used to manufacture differential equations satisfied by modular forms.\ While it has been known at least since the publication of \cite{Sti} that modular forms satisfy differential equations\footnote{\cite{Yan1} claims that this result dates back to the end of the nineteenth century, which is probably a reference to the pioneering work of Poincar\'e.}, in many cases the free-module theorem allows one to get one's hands on the differential equations in question quite easily.\ We will give several examples below.

\begin{rmk}
Several authors (e.g. \cite{Sti}, \cite{Yan1}, \cite{Yan2}) have discussed the problem of computing differential equations satisfied by scalar modular forms.
\end{rmk}
\begin{rmk}
The proof of Theorem \ref{t:freemodule} in \cite{MarMas} formally resembles the following result in the theory of $D$-modules: let $W_n = \CC[x_1,\ldots, x_n]\langle \partial_1,\ldots, \partial_n\rangle$ denote the \emph{Weyl algebra} of degree $n$.\ One thinks of this as a subalgebra of the algebra of linear endomorphisms of $\CC[x_1,\ldots, x_n]$ where $x_j$ acts like multiplication by $x_i$ and $\partial_i$ acts like $d/dx_i$.\ One can show that if $M$ is an $W_n$-module that is finitely generated as an $\CC[x_1,\ldots, x_n]$-module, then $M$ is free of finite rank as an $\CC[x_1,\ldots, x_n]$-module.\ Analogously, one of the key steps in the proof of the free-module theorem given in \cite{MarMas} is to show that $\cH(\rho)$ is finitely generated over $\CC[E_4,E_6]$. 
\end{rmk}

The free-module theorem tells us that the \emph{Hilbert-Poincar\'{e}} series of $\mathcal{H}(\rho)$,
i.e., the formal series $\sum_k \dim_\CC\mathcal{H}_k(\rho)t^k$, is a \emph{rational function}.\ More precisely, because 
$\mathcal{H}(\rho)$ has rank $n$ and the Hilbert-Poincar\'{e} series of $\CC[E_4, E_6]$ is $1/(1-t^4)(1-t^6)$, 
there are integers $k_1\leq ... \leq k_n$ such that
\[
\sum_k \dim\mathcal{H}_k(\rho)t^k = \frac{t^{k_1}+\cdots+t^{k_n}}{(1-t^4)(1-t^6)}.
\]
This is a sort of implicit Riemann-Roch theorem inasmuch as it tells us how to compute $\dim\mathcal{H}_k(\rho)$.\ The integers $k_1, \ldots, k_n$ are uniquely determined by $\rho$, being the weights of a free basis of $\mathcal{H}(\rho)$.\ They satisfy some additional constraints \cite{BanGan}, and can be computed in low-dimensional cases (e.g. \cite{Mas1}, \cite{Mar}, \cite{Mar2}), but in general they remain mysterious.

\subsection{Modular Wronskian}
\label{SSMW}
Let $\rho$ be an $n$-dimensional representation of $\Gamma$ with $\rho(T)$ unitarizable, so that the results of \cite{KnoMas2}, \cite{KnoMas3} and the free module theorem all apply to $\cH(\rho)$.\ The paper \cite{Mas3} is concerned with the problem of determining best-possible bounds $\beta$ such that for all $k < \beta$, the space $\cH_k(\rho)$ of weight $k$ vvmfs for $\rho$ is trivial. The key idea in that work is the use of the \emph{modular Wronskian} of a vvmf. If $F =\ ^t(f_1, ..., f_n)$ is a vvmf we define the modular Wronskian of $F$ as
\[
  W(F) = \det(F, DF, D^2F,\ldots, D^{n-1}F).
\]

The modular Wronskian is a very useful gadget inasmuch as it is both a scalar holomorphic form of weight $n(n+k-1)$ and ($\pm$ a power of $q$) times the usual Wronskian of $f_1, ..., f_n$.\
With a suitable normalization, one sees (cf.\ Theorem 3.7 of \cite{Mas3}) that $W(F) = \eta^{24\lambda}G$ where $G$ is a scalar holomorphic modular form of weight $n(n+k-1)-12\lambda$, where $\lambda$ is the sum of the exponents of the leading powers of $q$ of the components of $F$.\ This implies that $\cH_k(\rho) = 0$ if $k \leq 1-n,$ with equality only if $n=1$ and $\rho$ is the trivial representation of $\Gamma$.\ The modular Wronskian can be used to good effect in the study of vvmfs and is a useful tool in its own right.\ But it really presages the connections between vvmfs and linear differential equations that we take up in the next Subsection.

As a first application of ideas borrowed from the Frobenius-Fuchs theory, Theorem 4.3 of \cite{Mas3}  shows that a vvmf $F$ of weight $k$ satisfies the differential equation $W(F) = c\eta^{24\lambda}$ for a constant $c$ if, and only if, the components of $F$ form a fundamental system of solutions of an MLDE of weight $k$ (see Definition \ref{d:MLDE} below).\ This result, a modular version of Abel's theorem on the nonvanishing of Wronskians, leads to the existence of vvmfs $(F, \rho)$ with $\dim\rho=n\geq 2$ and $\mathcal{H}_{2-n}(\rho)\not= 0$.\ Thus the best-possible
value of $\beta$ is $2-\dim\rho$ for $n\geq 2$ (cf.\ the discussion following Theorem \ref{t:fourierbound}).

\subsection{Modular linear differential equations}
\label{SSMLDE}

The definition of an MLDE is as follows:
\begin{dfn}
\label{d:MLDE}
The general \emph{modular linear differential equation} (or MLDE) of weight $(k,l)$ and degree $n$ is the differential equation defined by the following modular differential operator
\begin{equation}\label{eq:DO}
 L_n = L_n^{(k,l)} = \sum_{j = 0}^n P_{k + 2(n-j)}(E_4,E_6)D_l^{(j)},
\end{equation}
where $P_d(E_4,E_6)$ denotes a homogeneous polynomial in $E_4$ and $E_6$ of degree $d$, where $E_4$ is of degree $4$ and $E_6$ is of degree $6$.\ If the MLDE has weight $(0, l)$, it is called 
\emph{monic} if the leading coefficient $P_0$ satisfies $P_0=1$.
\end{dfn}

For example, the general MLDE of weight $(8,l)$ and degree $3$ is defined by the differential operator
\[
  aE_4^2D^3_l + bE_4E_6D^2_l + (cE_4^3 + dE_6^2)D_l + eE_4^2E_6
\]
for complex parameters $a$ through $e$. A modular differential operator of weight $(k,l)$ and degree $n$ maps modular forms of weight $l$ to modular forms of weight $l + k + 2n$. In the low-dimensional examples below we will be mostly interested in the case $k = l = 0$.

The modular invariance of $D$ implies that the solution space of the MLDE defined by $L_n$ is a right $\Gamma$-module with respect to the stroke operator.\ A fundamental system of solutions of the MLDE are therefore the components of a vvmf (possibly meromorphic at infinity, and possibly logarithmic), as was explained in 
Section \ref{SSDandP}.\ In fact the converse is also true.\ Indeed, the results of \cite{Mas3} lead the second author to the study of the differential-algebraic structure of $\cH(\rho)$ for two-dimensional $\rho$ in \cite{Mas1}, and this brought Marks-Mason to the proof of the free-module theorem in \cite{MarMas}.\ 

The free-module theorem can be used to find MLDEs satisfied by the coordinates of vvmfs.\ The general idea is as follows: let $(F, \rho)$ be a (nonzero) vvmf of weight $l$ and with $\dim\rho=n$.\
Since $\mathcal{H}(\rho)$ has rank $n$ as a $\CC[E_4, E_6]$-module, the vvmfs $F, DF, ..., D^nF$ must be linearly dependent.\ That is we can find a weight $k$ and scalar forms $P_{k+2(n-j)}(E_4, E_6)$ so that $\sum_{j=0}^n P_{k+2(n-j)}(E_4, E_6)D^{j}_lF=0$.\ Thus the component functions $f_1, ..., f_n$ of
$F$ are solutions of an MLDE of weight $(k, l)$.\ Furthermore, if the $\{f_i\}$ are linearly independent (which may usually
be assumed in practice, and which in any case holds if $\rho$ is irreducible) then they are a \emph{basis} for the solution space of the MLDE.

Suppose that the coordinates $f_i$ of a vvmf $F$ are linearly independent.\ Were we to be satisfied with \emph{any} order $n$ MLDE satisfied by the $f_i$, we could employ an old device of Fuchs. Indeed, 
\[
\det\left(\begin{array}{cccc} f & Df & \hdots & D^nf \\ f_1 & Df_1 & \hdots & D^nf_1 \\ \vdots & \vdots &  & \vdots \\ f_n & Df_n & \hdots & D^nf_n\end{array}\right)=0
\]
is an MLDE such that the leading coefficient is the modular Wronskian $W(F)$.\ Often this is not good enough, however --- in practice MLDEs of low weight are easier to solve, and the leading coefficient $W(F)$ of this equation typically has large weight.\ The free module theorem allows us to find more amenable differential equations in many cases of interest.\ We will discuss the cases of representations of dimension $2$ and $3$ in detail in Section \ref{s:hypergeometric} below.\ Before turning to this we must discuss some generalities on reparameterizing MLDEs in terms of Hauptmoduls, with a view towards solving the various differential equations that will arise.

\subsection{Modular reparameterization of MLDEs}
\label{ss:modreparam}
Let
\begin{align*}
  \Delta &= (E_4^3-E_6^2)/1728, & j &= E_4^3/\Delta,\\
  A &= E_6/E_4, & K &= 1728/j.
\end{align*}
Note that $K$ is a local parameter at infinity on the $j$-line.\ Our goal is to reexpress certain MLDEs in terms of this parameter.\ To this end we introduce the notation $\theta = q\frac{d}{dq}$ and $\theta_K = K\frac{d}{dK}$.\ The following identities, which follow easily from (\ref{eq:eisderivs}) of Section \ref{ss:freemod}, will be used frequently below:
\begin{eqnarray}
   \theta &=& A\theta_K,\quad \theta(A) = \frac{E_2E_4E_6 - 3E_4^3 + 2E_6^2}{6E_4^2}, \notag \\
  E_4 &=& \frac{A^2}{1-K},\quad E_6 = \frac{A^3}{1-K}. \label{Aids}
\end{eqnarray}

These identities allow us to reexpress MLDEs in terms of $\theta_K$ and the local parameter $K$.\ For monic MLDEs of degree $\leq 3$, the resulting equations turn out to be generalized hypergeometric in the sense of Beukers-Heckmann \cite{BeuHec} (cf.\ Example \ref{exdeg23} below).\ Although this pattern fails to hold in higher degrees, in the monic case one obtains Fuchsian systems with regular singularities only at $\{0, 1, \infty\}$.\ To explain this, consider a modular linear differential operator of weight $(2k,0)$ and degree $n$:
\[
\sum_{j = 0}^n P_{2(k + n-j)}(E_4,E_6)D^{j},
\]
where
\[
  P_{2(k + n-j)}(E_4,E_6) = \sum_{2a+3b = k+n-j}c_{ab}^j E_4^aE_6^b = A^{k+n-j}\left(\sum_{2a+3b = k+n-j}\frac{c_{ab}^j}{(1-K)^{a+b}}\right).
\]
The following lemma expresses $D^j$ in terms of $\theta_K$.
\begin{lem}
\label{l:dtotheta}
For all $j \geq 1$ one has $D_0^{(j)} = A^jP_j(K,\theta_K)$ where\[
  P_j(K,\theta_K) = \sum_{r = 1}^j \frac{p_{jr}(K)}{(1-K)^{\rho(j,r)}}\theta_K^r,
\]
where $\rho(j,r) = \inf \{j-r,\floor{j/2}\}$ and each $p_{jr}(K) \in \CC[K]$ is a polynomial of degree $\leq \rho(j,r)$.
\end{lem}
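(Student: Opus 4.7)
The plan is to proceed by induction on $j$. The base case $j = 1$ is immediate: $D_0^{(1)} = D_0 = \theta = A\theta_K$, so $P_1(K,\theta_K) = \theta_K$, giving $p_{11}(K) = 1$ and $\rho(1,1) = 0$, matching the claim.

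For the inductive step I would use $D_0^{(j+1)} = D_{2j} \circ D_0^{(j)}$. Applied to a weight-zero function $f$, the product rule together with $\theta = A\theta_K$ yields
\[
D_0^{(j+1)} f \;=\; jA^{j-1}\bigl(\theta(A) - \tfrac{E_2 A}{6}\bigr) P_j f \;+\; A^{j+1}\theta_K(P_j f),
\]
so the key computation is $\theta(A) - E_2 A/6$. Starting from the formula for $\theta(A)$ in (\ref{Aids}) and substituting $E_4 = A^2/(1-K)$, $E_6 = A^3/(1-K)$, the expression collapses to $\theta(A) - E_2 A/6 = -A^2(1+2K)/(6(1-K))$, which gives the clean operator recursion
\[
P_{j+1} \;=\; \theta_K\circ P_j \;-\; \frac{j(1+2K)}{6(1-K)}\,P_j.
\]

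The substantive content of the lemma is the structural claim on the coefficients. I would introduce
\[
\cO(m) \df \{\,p(K)/(1-K)^m : p \in \CC[K],\ \deg p \leq m\,\},
\]
and record the two elementary facts that $\theta_K(\cO(m)) \subseteq \cO(m+1)$ and $\tfrac{1+2K}{6(1-K)} \cdot \cO(m) \subseteq \cO(m+1)$. Writing $a_{j,r} \df p_{j,r}(K)/(1-K)^{\rho(j,r)}$, the inductive hypothesis becomes $a_{j,r} \in \cO(\rho(j,r))$, and unpacking the recursion gives
\[
a_{j+1, s} \;=\; a_{j, s-1} \;+\; \theta_K(a_{j,s}) \;-\; \frac{j(1+2K)}{6(1-K)}\,a_{j,s},
\]
with boundary convention $a_{j,0} = a_{j, j+1} = 0$. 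A short case analysis comparing $\max\{\rho(j,s-1),\ \rho(j,s)+1\}$ with $\rho(j+1, s) = \inf\{j+1-s,\floor{(j+1)/2}\}$ shows that these elementary bounds coincide in every case except one.

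The remaining case is the main obstacle: when $j$ is \emph{even} and $s \leq j/2$, one has $\rho(j,s)+1 = j/2+1$ while $\rho(j+1, s) = j/2$, so the top-order pole of $\theta_K(a_{j,s}) - \tfrac{j(1+2K)}{6(1-K)} a_{j,s}$ at $K = 1$ must cancel. Writing $a_{j,s} = c/(1-K)^{m} + (\text{lower-order poles})$ with $m = j/2$ and expanding at $K = 1$ gives
\[
\theta_K(a_{j,s}) - \frac{j(1+2K)}{6(1-K)} a_{j,s} \;=\; \frac{c(2m-j)}{2(1-K)^{m+1}} + O\bigl((1-K)^{-m}\bigr),
\]
and the leading coefficient vanishes since $2m = j$. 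The polynomial-degree bound $\deg p_{j+1, s} \leq \rho(j+1, s)$ then follows from the elementary observation that if $r(K)/(1-K)^{m+1}$ lies in $\cO(m+1)$ and vanishes at $K = 1$, then $r(K) = (1-K) q(K)$ with $\deg q \leq m$, so the reduced fraction $q(K)/(1-K)^{m}$ lies in $\cO(m)$. This closes the induction.
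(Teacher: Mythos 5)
Your proof is correct and follows essentially the same route as the paper: the same induction via the operator recursion $P_{j+1}=\theta_K\circ P_j-\tfrac{j(1+2K)}{6(1-K)}P_j$ (coming from $D(A)=-A^2\tfrac{1+2K}{6(1-K)}$), the same reduction to the single problematic case of $j$ even with $s\le j/2$, and the same cancellation of the top-order pole there. The only difference is cosmetic: you verify the cancellation by a Laurent expansion at $K=1$ and a divisibility remark, whereas the paper carries out the equivalent polynomial simplification $\tfrac{3\theta_K(\alpha)(1-K)+3k\alpha K-k(1+2K)\alpha}{3(1-K)^{k+1}}=\tfrac{3\theta_K(\alpha)-k\alpha}{3(1-K)^{k}}$ explicitly.
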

\begin{proof}
For $j = 1$ the lemma follows since $D = A\theta_K$.\ For $j > 1$ we proceed by induction, noting first that $D(A) = -A^2\frac{1+2K}{6(1-K)}$.\ Thus,
\begin{align*}
  D^{j+1} &= D\left(A^jP_j(K,\theta_K)\right)\\
&=-jA^{j+1}\left(\frac{1+2K}{6(1-K)}\right)P_j(K,\theta_K) + A^{j}D_0\left(P_j(K,\theta_K)\right)\\
&= A^{j+1}\left(\theta_K(P_j(K,\theta_K)) - j\left(\frac{1+2K}{6(1-K)}\right)P_j(K,\theta_K) \right),
\end{align*}
and we deduce that
\[
  P_{j+1}(K,\theta_K) = \theta_K(P_j(K,\theta_K)) - j\left(\frac{1+2K}{6(1-K)}\right)P_j(K,\theta_K).
\]
Write $P_j(K,\theta_K) = \sum_{i = 1}^j p_{ij}(K)\theta^i$ for $p_{ij}(K) \in \CC(K)$.\ Then one deduces the following recurrence formulae for the coefficients $p_{ij}(K)$:
\begin{align*}
  p_{(j+1)(j+1)} &= 1,\\
  p_{i,(j+1)} &= \theta_K(p_{ij}) - j\frac{(1+2K)}{6(1-K)}p_{ij} + p_{(i-1)j},\\
p_{1(j+1)} &= \theta_K(p_{1j}) - j\frac{(1+2K)}{6(1-K)}p_{1j},
\end{align*}
for $i$ between $2$ and $j$, while $P_1(K,\theta_K) = \theta_K$.\ The first identity above proves the claim for $p_{jj}(K)$.\ If $j$ is odd then the second two recursive identities show that the numerator of $p_{1(j+1)}$ is of degree at most one larger than that of $p_{1j}$, and the degree of $(1-K)$ in the denominator of $p_{1(j+1)}$ is also at most one larger.\ Thus, we are reduced to proving that the degrees do not increase in the case when $j$ is even and $i \leq j/2$.

Momentarily write $p_j = p_{1j}$ and let $j =2k$.\ By induction we may write $p_{j} = \alpha/(1-K)^{k}$ where $\alpha \in \CC[K]$ of degree at most $k$.\ We see that
\begin{align*}
 p_{j+1} &= \theta_K\left(\frac{\alpha}{(1-K)^k}\right) - 2k\frac{1+2K}{6-6K}\frac{\alpha}{(1-K)^k}\\
&= \frac{\theta_K(\alpha)(1-K)^k + k\alpha(1-K)^{k-1}K}{(1-K)^{2k}} - k\frac{(1+2K)\alpha}{3(1-K)^{k+1}}\\
&= \frac{3\theta_K(\alpha)(1-K) + 3k\alpha K-k(1+2K)\alpha}{3(1-K)^{k+1}}\\
&= \frac{3\theta_K(\alpha) - k\alpha}{3(1-K)^{k}}.
\end{align*}
Since $\theta_K(\alpha)$ is a polynomial in $K$ of the same degree as $\alpha$, this proves the claim for $p_{1(j+1)}$.\ The proof for terms $p_{i(j+1)}$ where $i$ is between $2$ and $j/2$ is identical, as the additional term $p_{(i-1)j}$ appearing in the recursive formula is already of the desired form $\beta/(1-K)^{j/2}$ by induction.
\end{proof}

\begin{rmk}
It does not appear to be easy to write down a general formula for the rational functions $p_{j,j+a} \in \CC(K)$ arising in the previous proof.
\end{rmk}

From Lemma \ref{l:dtotheta} one  deduces the following result.
\begin{thm}
\label{t:monicfuchsian}
  Every monic MLDE in terms of $D_0$ becomes a Fuchsian differential equation on $\PP^1\setminus{\{0,1,\infty\}}$ when expressed in terms of the parameter $K = 1728/j$.\ More precisely, under this reparameterization and after dividing by $A^n$, a general degree $n$ monic MLDE in terms of $D_0$ takes the form
\[
  \left(\sum_{r = 0}^{\floor{n/2}-1}\frac{f_{n-r}(K)}{(1-K)^r}\theta_K^{n-r} + \frac{1}{(1-K)^{\floor{n/2}}}\sum_{r = \floor{n/2}}^{n}f_{n-r}(K)\theta_K^{n-r}\right)f = 0,
\]
where $\theta_K = K\frac{d}{dK}$, $f_n(K) = 1$, and $f_{n-r}(K)$ is a polynomial of degree $\leq \inf\{r,\floor{n/2}\}$.
\end{thm}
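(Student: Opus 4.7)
The plan is a direct substitution: apply Lemma \ref{l:dtotheta} to expand each $D_0^{(j)}$ in powers of $\theta_K$, use the identities (\ref{Aids}) to express each coefficient polynomial $P_{2(n-j)}(E_4,E_6)$ as $A^{n-j}$ times a rational function of $K$, factor out the global $A^n$, and check that the resulting coefficient of each $\theta_K^s$ has the claimed shape.

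I would begin by observing that any monomial $E_4^aE_6^b$ of weight $2(n-j)$ (so $2a+3b=n-j$) equals $A^{n-j}/(1-K)^{a+b}$ by (\ref{Aids}), and that the maximum value of $a+b$ under this constraint is $\floor{(n-j)/2}$ (attained by taking $b\in\{0,1\}$). Hence
\[
P_{2(n-j)}(E_4,E_6) \;=\; \frac{A^{n-j}\,g_j(K)}{(1-K)^{\floor{(n-j)/2}}}
\]
for some polynomial $g_j(K) \in \CC[K]$ of degree at most $\floor{(n-j)/2}$. Combining this with Lemma \ref{l:dtotheta} and dividing the monic MLDE $L_n f = 0$ by $A^n$ produces
\[
P_n(K,\theta_K)f \;+\; \sum_{j=0}^{n-1}\frac{g_j(K)}{(1-K)^{\floor{(n-j)/2}}}\,P_j(K,\theta_K)f \;=\; 0,
\]
with the convention $P_0(K,\theta_K)=1$. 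The coefficient of $\theta_K^n$ is $p_{nn}(K)=1$; the coefficient of $\theta_K^0$ is $g_0(K)/(1-K)^{\floor{n/2}}$; and for $1 \le s \le n-1$ the coefficient of $\theta_K^s$ equals
\[
\frac{p_{ns}(K)}{(1-K)^{\rho(n,s)}} \;+\; \sum_{j=s}^{n-1}\frac{g_j(K)\,p_{js}(K)}{(1-K)^{\floor{(n-j)/2}+\rho(j,s)}}.
\]

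The crux of the proof, and the one real obstacle, is the combinatorial inequality
\[
\floor{(n-j)/2} + \rho(j,s) \;\le\; \rho(n,s) \qquad (s \le j \le n-1).
\]
Bounding $\rho(j,s) \le j-s$ and $\floor{(n-j)/2}\le n-j$ yields $\le n-s$, while bounding $\rho(j,s)\le \floor{j/2}$ together with the easy parity check $\floor{(n-j)/2}+\floor{j/2}\le \floor{n/2}$ yields $\le \floor{n/2}$; the minimum is $\rho(n,s)=\inf\{n-s,\floor{n/2}\}$. Given this inequality, every summand in the coefficient of $\theta_K^s$ has denominator dividing $(1-K)^{\rho(n,s)}$; placing the sum over this common denominator produces a polynomial numerator of degree at most $\rho(n,s)$, since $\deg(g_j p_{js}) \le \floor{(n-j)/2}+\rho(j,s)$ and the padding factor $(1-K)^{\rho(n,s)-\floor{(n-j)/2}-\rho(j,s)}$ makes up the difference. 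Defining $f_{n-r}(K)$ to be this combined numerator yields the exact form stated, with $\deg f_{n-r}\le \inf\{r,\floor{n/2}\}$. Fuchsianness is then automatic: denominators vanish only at $K=1$ with pole order $\rho(n,s)\le n-s$ (the Fuchsian bound on the coefficient of $(d/dK)^s$), $K=0$ is regular singular by virtue of the $\theta_K$-normal form, and the substitution $W=1/K$ (under which $\theta_K\mapsto -\theta_W$, while each $f_s(1/W)(1-1/W)^{-\rho(n,s)}$ extends holomorphically to $W=0$ using $\deg f_s\le \rho(n,s)$) handles $K=\infty$.
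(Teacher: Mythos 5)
Your proposal is correct and follows essentially the same route as the paper's proof: expand each $D_0^{(j)}$ via Lemma \ref{l:dtotheta}, rewrite the coefficient forms using the identities (\ref{Aids}), rescale by $A^n$, and control the pole order at $K=1$ via the inequality $\floor{(n-j)/2}+\rho(j,s)\le\rho(n,s)$ (which the paper invokes tersely as ``$\rho(k,r)=\inf\{k-r,\floor{k/2}\}$ and $a+b<(n-k)/2$''). Your write-up merely makes that inequality and the final Fuchsianness check at $0$, $1$, $\infty$ more explicit than the paper does.
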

\begin{proof}
The cases of degree $\leq 1$ are trivial.\ The general monic modular linear differential operator of degree $n \geq 2$ is of the form
\[
  D^n + \sum_{k = 0}^{n-2}\sum_{2a+3b = 2(n-k)} c_{ab}E_4^aE_6^bD^k \quad (c_{ab}\in \CC).
\]
By the preceding lemma we may reexpress this as
\[
  A^n\sum_{r = 1}^n\frac{p_{nr}(K)}{(1-K)^{\rho(n,r)}}\theta_K^r + \sum_{k = 0}^{n-2}A^{n-k}\sum_{2a+3b = n-k} \frac{c_{ab}}{(1-K)^{a+b}}\left(A^k\sum_{r = 1}^k\frac{p_{kr}(K)}{(1-K)^{\rho(k,r)}}\theta_K^r\right)
\]
which, after collecting terms and rescaling by $A^n$, yields the operator:
\begin{align*}
&\theta^n_K - \frac{n(n-1)}{12}\left(\frac{1+2K}{1-K}\right) \theta_K^{n-1}+\sum_{r = 1}^{n-2}\left(\frac{p_{nr}(K)}{(1-K)^{\rho(n,r)}} + \sum_{k = r}^{n-2}\sum_{2a+3b = n-k} \frac{c_{ab}p_{kr}(K)}{(1-K)^{a+b+\rho(k,r)}}\right)\theta_K^r + \\
&\quad\quad +\sum_{2a+3b = n} \frac{c_{ab}}{(1-K)^{a+b}}.
\end{align*}
Since $\rho(k,r) =  \inf\{k-r,\lfloor k/2\rfloor\}$ and $a+b < (n-k)/2$, the theorem follows.
\end{proof}

\begin{ex}
\label{exdeg23}
  The degree $2$ and $3$ equations are generalized hypergeometric equations, in the sense of \cite{BeuHec}, defined by the following differential operators.
\begin{align*}
\theta_K^{2}&  - \left(\frac{2K+1}{6(1-K)}\right) \theta_K + \frac{a}{1-K},\\
\theta_K^{3}& - \left(\frac{2K+1}{2(1-K)}\right) \theta_K^{2}  + \left(\frac{18a+1-4K}{18(1-K)} \right) \theta_K + \frac{b}{1-K}.
\end{align*}
\end{ex}

\begin{ex}
\label{exdeg45}
  The degree $4$ and $5$ equations are the differential equations defined by the following differential operators.
\begin{align*}
\theta_K^{4} - &\left(\frac{2K+1}{1-K}\right) \theta_K^{3} + \left(\frac{44 K^{2} - 4(9 a+7)K + 36 a + 11}{36 (1-K)^2}\right) \theta_K^{2} + \\
&\quad \left(\frac{8 K^{2} - 4(3 a + 9 b + 1)K - 6 a + 36 b - 1}{36 (1-K)^2}\right) \theta_K + \frac{c}{(1-K)^2},\\
\theta_K^{5} - &\left(\frac{5(2K+1)}{3(1-K)}
\right) \theta_K^{4} + \left(\frac{140 K^{2} - 4(9 a + 10) K + 36 a + 35}{36(1-K)^2}\right)\theta_K^{3} + \\
&\quad\left(\frac{200 K^{2} - 4(27 a + 27 b + 10) K - 54 a + 108 b - 25}{108(1-K)^2}\right)\theta_K^{2} + \\
&\quad\left(\frac{16 K^{2} - 2(6 a + 3 b + 1)K + 3 a - 9 b + 54 c + 1}{54(1-K)^2}\right)\theta_K + \frac{d}{(1-K)^2}.
\end{align*}
\end{ex}

\begin{rmk}
While the equations in Examples \ref{exdeg23} and \ref{exdeg45} are rigid in the sense of Katz \cite{Kat}, one does not always obtain rigid equations from monic MLDEs in degrees $6$ and higher.
\end{rmk}

It is not true that all MLDEs are pullbacks of Fuchsian equations in this manner.\ The following theorem identifies exactly which ones \emph{are}.

\begin{thm}
\label{t:mainfuchsian}
An MLDE of degree $n$ in terms of $D = D_0$ is the pullback via $K$ of a Fuchsian equation on $\PP^1\setminus{\{0,1,\infty\}}$ if, and only if, when rescaled by $E_6^{2n}$ it takes the form
\begin{eqnarray}\label{basicfuchs}
  ((E_4E_6)^nD^n + F_1(E_4E_6)^{n-1}D^{n-1} + \cdots + F_{n-1}E_4E_6D + F_n)f = 0,
\end{eqnarray}
where each $F_k$ is a holomorphic modular form of weight $12k$.
\end{thm}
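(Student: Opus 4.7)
The plan is to carry out a direct computation, rewriting the operator in both directions between the form (\ref{basicfuchs}) and its $(K,\theta_K)$-expression, using the identities (\ref{Aids}) and Lemma \ref{l:dtotheta}. The crucial input beyond what is already in the paper is the identity $A^{6n} = E_6^{2n}(1-K)^{2n}$, which follows from $A^3 = E_6(1-K)$ and which explains why the rescaling factor in the theorem is precisely $E_6^{2n}$.

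First I would handle the ``only if'' direction. Assume the MLDE has the form (\ref{basicfuchs}). I would write any holomorphic modular form $F_k$ of weight $12k$ as $F_k = A^{6k}\tilde Q_k(K)/(1-K)^{3k}$ with $\tilde Q_k \in \CC[K]$ of degree at most $k$: this follows by expanding $F_k$ as a polynomial in $E_4,E_6$ and applying (\ref{Aids}), noting that each monomial $E_4^aE_6^b$ with $4a+6b=12k$ contributes $A^{6k}/(1-K)^{a+b}$ with $a+b$ ranging in $\{2k,\ldots,3k\}$. Combining this with $(E_4E_6)^{n-k} = A^{5(n-k)}/(1-K)^{2(n-k)}$ and Lemma \ref{l:dtotheta}'s formula $D^{n-k} = A^{n-k}\Pi_{n-k}(K,\theta_K)$, the $k$-th term of (\ref{basicfuchs}) simplifies to $A^{6n}\tilde Q_k\Pi_{n-k}/(1-K)^{2n+k}$. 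Pulling out the common factor $E_6^{2n} = A^{6n}/(1-K)^{2n}$ presents the operator as
\[
E_6^{2n}\sum_{k=0}^n \frac{\tilde Q_k(K)\,\Pi_{n-k}(K,\theta_K)}{(1-K)^k}f = 0,
\]
and the bracketed operator is a Fuchsian operator on $\PP^1\setminus\{0,1,\infty\}$: its $\theta_K^n$-coefficient is identically $1$ (since $p_{nn}=1$ in Lemma \ref{l:dtotheta}), the lower coefficients are rational in $K$ with poles only at $K=1$ of controlled order, and a direct check at $K=\infty$ confirms regular singularity there.

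For the ``if'' direction, I would reverse the argument. Starting from an arbitrary MLDE of degree $n$, observe that each term $P_d(E_4,E_6)D^j$ picks up a factor $A^{d/2+j}$ when rewritten via (\ref{Aids}) and Lemma \ref{l:dtotheta}; since the MLDE is weight-homogeneous, the exponent $d/2+j$ is constant across terms and can be factored out, leaving a purely $(K,\theta_K)$ operator $\cL$. The hypothesis that the MLDE is the pullback of a Fuchsian equation on $\PP^1\setminus\{0,1,\infty\}$ is exactly that $\cL$ has this property, and by matching $(1-K)$-denominators against the structure of $\Pi_j$ from Lemma \ref{l:dtotheta}, one reads off the precise pole/zero structure forced on the original modular form coefficients. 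Converting back, the only way the leading coefficient of the MLDE can be a modular form of weight $10n$ whose zeros on the fundamental domain are confined to $\tau = i$ and $\tau = \rho$ is for it to equal a scalar multiple of $(E_4E_6)^n$; the lower coefficients are then forced into the shape $F_k(E_4E_6)^{n-k}$ with $F_k$ holomorphic of weight $12k$.

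The main obstacle will be the ``if'' direction: the nontrivial step is to trace the absence of singularities away from $\{0,1,\infty\}$ in the Fuchsian equation back through the change of variables to rigidly constrain the modular form coefficients of the original MLDE. In particular, one must verify that Fuchsianity at $K=\infty$ (the elliptic point $\tau=\rho$) forces the vanishing order $n$ of $E_4$ in the leading coefficient, and Fuchsianity at $K=1$ (the elliptic point $\tau=i$) forces the vanishing order $n$ of $E_6$, so that \emph{holomorphy} of each $F_k$ is precisely the condition that all would-be singularities of the pulled-back equation on the modular curve (apart from those above $\{0,1,\infty\}$) are in fact absent.
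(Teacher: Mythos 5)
Your first direction (from the shape (\ref{basicfuchs}) to a Fuchsian equation) is essentially the paper's argument with different bookkeeping: the paper indexes the coefficient monomials as $a_{ij}E_4^{n+3j-i}E_6^{n+i-2j}$ and cancels the common factor $A^{6n}(1-K)^{-2n}=E_6^{2n}$, whereas you package each weight-$12k$ coefficient as $A^{6k}\tilde Q_k(K)/(1-K)^{3k}$. The resulting computation and the key denominator bound $k+\rho(n-k,r)\le n-r$ are the same, and this half is fine, though you should actually record the check at $K=\infty$ (it follows from $\deg\bigl(\tilde Q_k\, p_{(n-k)r}\bigr)\le k+\rho(n-k,r)$, so each coefficient of $\theta_K^r$ is bounded at infinity).

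The genuine gap is in the converse direction, and you have correctly located it but not filled it. The paper does not prove this direction by constraining the coefficients of a given MLDE; it starts from an arbitrary Fuchsian operator $\sum_{0\le j\le i\le n}a_{ij}\,\theta_K^{n-i}/(1-K)^j$ and \emph{inverts} the triangular relation of Lemma \ref{l:dtotheta}, writing $\theta_K^{n}=\sum_s h_{0s}A^{-(n-s)}D^{n-s}$. The entire difficulty is the claim that $h_{0s}$ has denominator no worse than $(1-K)^s$, with numerator $H_s=(1-K)^sh_{0s}$ a polynomial of degree at most $s$; the paper proves this by a descending induction on the minors $\delta_r$ of the triangular matrix $(g_{rs})$. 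That estimate is exactly what makes $H_sE_4^{3(s+j)}$ a \emph{holomorphic} form (via $K=1728\Delta/E_4^3$) and hence makes each $F_k$ holomorphic of weight $12k$. Your proposal replaces this with ``matching $(1-K)$-denominators \ldots one reads off the pole/zero structure,'' which is not an argument. Moreover your shortcut for the leading coefficient is false as written: a holomorphic form of weight $10n$ whose zeros are confined to $\tau=i$ and $\tau=\rho$ need not be a multiple of $(E_4E_6)^n$ --- any monomial $E_4^aE_6^b$ with $2a+3b=5n$, for instance $E_4^{n+3}E_6^{n-2}$ when $n\ge 2$, also qualifies. Pinning down the exact vanishing orders requires the local-exponent analysis at $K=1$ and $K=\infty$, which is again precisely the content of the denominator estimate you have deferred to ``the main obstacle.''
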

\begin{proof}
We first show that every such equation becomes Fuchsian on $\PP^1 \setminus{\{0,1,\infty\}}$. Write a generic modular operator as in (\ref{basicfuchs}) in the form
\begin{equation}
\label{eq:genfuchsMLDE}
  (E_4E_6)^nD^n + \sum_{i = 1}^n\sum_{j = 0}^i a_{ij}E_4^{n+3j-i}E_6^{n+i-2j}D^{n-i}\ \ (a_{ij}\in \CC).
\end{equation}
Lemma \ref{l:dtotheta} tells us that for all $j \geq 1$ we have 
\[
  D^j = A^j\sum_{r = 1}^j \frac{p_{jr}(K)}{(1-K)^{\rho(j,r)}}\theta_K^r,
\]
where $\rho(j,r) = \inf \{j-r,\lfloor j/2\rfloor\}$ and each $p_{jr}(K) \in \CC[K]$ is a polynomial of degree $\leq \rho(j,r)$. We may substitute this, along with the identities (\ref{Aids}), into (\ref{eq:genfuchsMLDE}) to obtain the differential operator
\begin{align*}
  \frac{A^{5n}}{(1-K)^{2n}}\left(A^n\sum_{r = 1}^n \frac{p_{nr}(K)}{(1-K)^{\rho(n,r)}}\theta_K^r\right)& + \sum_{i = 1}^{n-1}\sum_{j = 0}^i a_{ij}\frac{A^{5n + i}}{(1-K)^{2n+j}}\left(A^{n-i}\sum_{r = 1}^{n-i} \frac{p_{(n-i)r}(K)}{(1-K)^{\rho(n-i,r)}}\theta_K^r\right)\\
&+\sum_{j=0}^n a_{nj}\frac{A^{6n}}{(1-K)^{2n+j}}.
\end{align*}
Cancelling the factor of $A^{6n}(1-K)^{-2n}$ yields
\[
  \left(\sum_{r = 1}^n \frac{p_{nr}(K)}{(1-K)^{\rho(n,r)}}\theta_K^r\right) + \sum_{i = 1}^{n-1}\sum_{j = 0}^i a_{ij}\left(\sum_{r = 1}^{n-i} \frac{p_{(n-i)r}(K)}{(1-K)^{j+\rho(n-i,r)}}\theta_K^r\right) + \sum_{j = 0}^n\frac{a_{nj}}{(1-K)^j}.
\]
To see that this is Fuchsian, rewrite this as
\[
  \theta_K^n + \sum_{r = 1}^{n-1}\left(\sum_{i = 1}^{n-r}\sum_{j = 0}^{i}\frac{a_{ij}p_{(n-i)r}(K)}{(1-K)^{j+\rho(n-i,r)}}\right)\theta_K^r + \sum_{j = 0}^n\frac{a_{nj}}{(1-K)^j},
\]
and then note that $i + \rho(n-i,r) \leq n-r$ for all $i \in \{1,2,\ldots, n-r\}$.

Now we need to show that every Fuchsian equation on $\PP^1\setminus{\{0, 1, \infty\}}$ can be expressed as above.\ Begin by inverting the relation
\begin{eqnarray*}
D^n = A^n\left(\sum_{r=0}^{\floor{n/2}-1} \frac{p_{n,n-r}(K)}{(1-K)^{r}}\theta_K^{n-r}+
\frac{1}{(1-K)^{\floor{n/2}}} \sum_{r=\floor{n/2}}^{n-1} p_{n,n-r}(K)\theta_K^{n-r}\right)
\end{eqnarray*}
of Lemma \ref{l:dtotheta} expressing $D^n$ in terms of powers of $\theta_K$. Thus,
\begin{eqnarray*}
\left(\begin{array}{c}D^n \\ D^{n-1} \\ \vdots \\ D^2 \\ D^1\end{array}\right)=
\left(\begin{array}{ccccc}A^n & 0 &  & 0 & 0 \\0 & A^{n-1} &  & 0 & 0 \\\vdots & \vdots & \ddots & \vdots & \vdots \\0 & 0 &  & A^2 & 0 \\0 & 0 &  & 0 & A\end{array}\right) \left(\begin{array}{ccccc}1 & g_{01} &g_{02}  & \hdots &  g_{0(n-1)}\\  & 1 & g_{12} & \hdots &  g_{1(n-1)}\\  &  & 1 & \hdots &  g_{2(n-1)}\\ &  &  & \ddots &  \vdots\\ &  &  &  & 1\end{array}\right)   \left(\begin{array}{c}\theta_K^n \\ \theta_K^{n-1} \\ \vdots \\ \theta_K^2 \\ \theta_K\end{array}\right)
\end{eqnarray*}
\begin{eqnarray*}
g_{rs}= \mbox{coefficient of $\theta_K^{n-r-s}$ in $A^{-(n-r)}D^{n-r}$}=\frac{p_{(n-r)(n-r-s)}}{(1-K)^{\rho(n-r,n-r-s)}},
\end{eqnarray*}
where $0 \leq r \leq s \leq n-1$. Note that $\rho(n-r,n-r-s) = \inf\{s,\floor{(n-r)/2}\}$.

 Define elements $h_{ij} \in \CC[K][1/(1-K)]$ by inverting the relation above:
\begin{eqnarray*}
 \left(\begin{array}{c}\theta_K^n \\ \theta_K^{n-1} \\ \vdots \\ \theta_K^2 \\ \theta_K\end{array}\right)=
 \left(\begin{array}{ccccc}1 & h_{01} &h_{02}  & \hdots &  \\  & 1 & h_{12} & \hdots &  \\  &  & \ddots &  &  \\ & 0 &  & 1 &  \\ &  &  &  & 1\end{array}\right)
\left(\begin{array}{c}A^{-n}D^n \\ A^{-(n-1)}D^{n-1} \\ \vdots \\ A^{-2}D^2 \\ A^{-1}D\end{array}\right).  \end{eqnarray*}
Note that
\[\theta_K^n =\sum_{s=0}^{n-1} h_{0s}A^{-(n-s)}D^{n-s} = E_6^{-n}\left\{\sum_{s=0}^{n-1} h_{0s}E_6^sE_4^{n-s}D^{n-s}\right\},\]
and
\begin{eqnarray*}
h_{0s} = (-1)^s\det \left(\begin{array}{ccccc}g_{01} & g_{02} & \hdots & &g_{0s} \\ 1 & g_{12} & \hdots && g_{1s} \\0 & 1 & \ddots & &\vdots \\ \vdots & \vdots & \ddots & & 
\\0 & 0 & \hdots & 1&g_{(s-1)s}\end{array}\right).
\end{eqnarray*}

We claim that the denominator of $h_{0s}$ is ``no worse'' than $(1-K)^s=(E_6^2/E_4^3)^{s}$.\ To prove this, for each $0 \leq r < s$ let $\delta_r$ denote the determinant of the matrix appearing in the definition of $h_{0s}$, but with the first $r$ rows and columns deleted.\ We will show by descending induction that $\delta_r$ has denominator no worse than $(1-K)^{s-r}$.\ When $r = s-1$ and $\delta_{s-1} = g_{(s-1)s}$, this is clear. For $r < s-1$ we have
\[
\delta_r = g_{r(r+1)}\delta_{r+1}  - \delta'.
\]
By induction, the denominator for $\delta_{r+1}$ is no worse than $(1-K)^{s-r-1}$, while that for $g_{r(r+1)}$ is $1-K$.\ So the first term has denominator at worst $(1-K)^{s-r}$.\ On the other hand, $\delta'$ is the determinant of a matrix whose first row consists of entries $g_{r,j}$.\ These have denominator a power of $(1-K)$ that is at most one higher than that appearing in the $g_{r+1,j}$.\ Hence, the denominator of $\delta'$ contains at most one more factor of $(1-K)$ than in $\delta_{r+1}$.\ Hence $(1-K)^{s-r}\delta_r$ is a polynomial.\ If we take $r = 0$ then we deduce that $(1-K)^s\delta_0 = (-1)^s(1-K)^sh_{0s}$ is a polynomial, which proves the claim.

 Write 
 \begin{eqnarray*}
 h_{0s} = \frac{H_s}{(1-K)^s}=\frac{H_s E_4^{3s}}{E_6^{2s}}.
 \end{eqnarray*}
  By the previous paragraph we have
 $H_s\in \mathbf{C}[K]$, moreover $H_s$ has degree at most $s$.
It follows that for $0\leq j\leq i\leq n$ we have
\begin{eqnarray*}
E_6^{2n}\frac{\theta_K^{n-i}}{(1-K)^j} &=& E_6^{2n}\left(\frac{E_4^3}{E_6^2}\right)^j E_6^{i-n}
\sum_{s=0}^{n-i-1} \frac{H_s E_4^{3s}}{E_6^{2s}}  E_6^sE_4^{n-i-s}D^{n-i-s} \\
&=& E_6^{2(i-j)}\sum_{s=0}^{n-i-1} H_s
E_4^{3(s+j)} (E_4E_6)^{n-i-s}D^{n-i-s}.
\end{eqnarray*}

Observe that because $K= 1728\Delta/E_4^3$ and $H_s$ has degree at most $s$, 
the forms $H_sE_4^{3(s+j)}$ are in fact \emph{holomorphic} of weight $3(s+j)$.\ Consequently,  the operator
$E_6^{2n}\frac{\theta_K^{n-i}}{(1-K)^j}$ is of the correct shape, i.e., it takes the form of
the left-hand-side of (\ref{basicfuchs}) where $F_{i+s}= E_6^{2(i-j)}\sum_s H_sE_4^{3(s+j)}$
has weight $12(s+i)$.\ Finally, \emph{every} Fuchsian operator of degree $n$ on $\PP^1\setminus{\{0, 1, \infty\}}$ can be written as a sum 
\begin{eqnarray*}
\sum_{0\leq j \leq i \leq n} a_{ij} \frac{\theta_K^{n-i}}{(1-K)^j}\ \ (a_{ij}\in \mathbf{C}).
\end{eqnarray*}
Therefore, the general case follows from the special case of a single summand that we just established.
\end{proof}

\begin{rmk} Dividing throughout by $\Delta^n$, we can rewrite the left hand side of
(\ref{basicfuchs}) as a polynomial differential operator in the weight zero operator $(E_4E_6/\Delta)D$
with coefficients in the ring of weight zero modular functions with poles at $\infty$.\ This is the preferred approach of Bantay-Gannon \cite{BanGan}, in which everything happens at weight $0$.
\end{rmk}

\begin{rmk}
Theorem \ref{t:mainfuchsian} appears to be new.\ It is a generalization of Theorem A in \cite{Tsu}, which treats the case of equations of degree $2$.\ Note that the differential equations in \cite{Tsu} are expressed relative to the differential operator $q\frac{d}{dq}$ rather than $D$.
\end{rmk}

\section{Vector-valued modular forms and hypergeometric series}
\label{s:hypergeometric}
\subsection{The monic MLDE of degree $2$}\label{SSMLDE2}
Let $\rho$ be a two-dimensional representation of $\Gamma$.\ For simplicity assume that $\rho$ is irreducible and that $\rho(T)$ has finite order. Under these hypotheses, the free-module theorem applies and $\rho(T)$ has distinct eigenvalues (otherwise $\rho$ factors through the abelianization of $\Gamma$).\ Let $F,G$ denote a basis for $\cH(\rho)$ as an $\CC[E_4,E_6]$-module, where $k$ and $l$ are the weights of $F$ and $G$ respectively, with $k \leq l$.\ Since there are no holomorphic modular forms of weight $2$ for $\Gamma$, the free-module theorem implies that $DF = \alpha G$ for some complex scalar $\alpha$.\ As explained in Section \ref{SSMLDE}, the irreducibility of $\rho$ ensures that $DF\not= 0$.\ We deduce that $\cH(\rho)$ is the free $\CC[E_4,E_6]$-module spanned by $F$ and $DF$.\ Since $D^2F \in \cH_{k+4}(\rho)$, it must be a linear combination of $F$ and $DF$ with coefficients in $\CC[E_4, E_6]$.\ Since $DF$ has weight $k+2$ and there are no nonzero scalar forms of weight $2$ it follows that the coordinates of $F$ are a basis of solutions to an MLDE of the form
\[
  D^2_{k}f + aE_4f = 0,
\]
where $a \in \CC$.\ In this way we see that the components of a minimal weight vvmf for such a $\rho$ satisfy a \emph{monic} MLDE. 

 After reparameterizing this equation via $K$ as discussed in Section \ref{ss:modreparam} (cf. Example \ref{exdeg23}), when $k = 0$ one obtains the hypergeometric differential equation
\begin{equation}
\label{eq:hypgeom}
  \left(\theta_K^2 - \left(\frac{2K+1}{6(1-K)}\right)\theta_K + \frac{a}{1-K}\right)f = 0.
\end{equation}
In \cite{FraMas} it was observed that since $D(\eta) = 0$, one can always reduce to the case where the minimal weight is $0$ by rescaling $F$ by a power of the eta-function\footnote{This rescaling technique also appears in \cite{Sti}, where Stiller rescales by powers of a holomorphic modular form of weight $1$ to reduce to the weight zero case. For this reason \cite{Sti} focuses on subgroups of $\Gamma$ that do not contain the matrix $-I$, so that forms of weight one exist. Stiller remarks that this is inessential. For example, if one allows more general mutlipliers one could work with $\eta$ instead, as we do here.}. This has the effect of replacing $\rho$ by a twist $\rho\otimes \chi^{-k}$ where $\chi$ is the $1$-dimensional representation of $\Gamma$ satisfying
\begin{equation}\label{eq:chidef}
  \chi\twomat 0{-1}10 = i,\quad\quad \chi\twomat 1101 = e^{\pi i / 6}.
\end{equation}
This is the character corresponding to $\eta^2$, in the sense that $\eta^2(\gamma\tau)=\chi(\gamma)\eta^2(\tau)$ for $\gamma\in\Gamma$. Making the reduction to weight zero allows us to express the coordinates of $F$ in terms of $\eta$, the local parameter $K$, and hypergeometric series.

Let $a_0,a_1,\ldots, a_n$ and $b_1, \ldots ,b_{n}$ denote complex numbers such that no $b_i$ is a negative integer. The corresponding \emph{hypergeometric series} is defined by
\[
  _nF_{n-1}(a_0,\ldots, a_n; b_1,\ldots, b_{n};z) = 1 + \sum_{r\geq 1} \frac{(a_0)_r(a_1)_r\cdots (a_n)_r}{(b_1)_r(b_2)_r\cdots (b_{n})_r} \frac{z^r}{r!},
\]
where for $r \in \ZZ_{\geq 1}$ and $\alpha \in \CC$,  we write $(\alpha)_r = \alpha(\alpha + 1)\cdots (\alpha + r-1)$ for the \emph{rising factorial}.\ Let $\alpha_1,\ldots, \alpha_n$ and $\beta_1,\ldots, \beta_n$ denote complex numbers. Then the \emph{general hypergeometric differential equation}, as discussed in \cite{BeuHec}, is the equation defined by the differential operator
\[
  (\theta_K + \beta_1-1)\cdots (\theta_K + \beta_n-1) - K(\theta_K + \alpha_1)\cdots (\theta + \alpha_n).
\]
If the numbers $\beta_1,\ldots, \beta_n$ are distinct mod $\ZZ$, then $n$ independent solutions of the hypergeometric equation are given by the series
\[
  K^{1-\beta_i}\hspace{0mm} _nF_{n-1}(1 + \alpha_1-\beta_i,\ldots, 1 + \alpha_n-\beta_i; 1 + \beta_1 - \beta_i, \stackrel{\vee}{\ldots}, 1 + \beta_n-\beta_i; z),
\]
where the $\vee$ denotes omission of $1 = 1 + \beta_i-\beta_i$.

Let us apply this to our vvmf $F$ of (minimal) weight $k$.\ The eigenvalues of $\rho(T)$, which encode the local monodromy of (\ref{eq:hypgeom}) about $K = 0$, are distinct roots of unity.\ Let $r_1$ and $r_2$ denote the exponents of these eigenvalues.\ That is, $\rho(T)$ is conjugate to the matrix
\[
  \twomat{e^{2\pi i r_1}}{0}{0}{e^{2\pi i r_2}}.
\]
While these exponents are only defined mod $\ZZ$, it is natural to take them in the range $[0,1)$ for the following reason: Theorem 1.3 of \cite{MarMas} shows that since $\cH(\rho)$ is a cyclic $\CC[E_4,E_6]\langle D\rangle$-module in this case, this choice of exponents ensures that they agree with the indicial roots of the form $F$ of minimal weight. Said differently, when $\rho(T)$ is diagonal as above, the coordinates of $F$ have $q$-expansions of the form $q^{r_i}$ plus higher order terms.
 
The coordinates of $G = \eta^{-2k}F$ are now solutions of an equation (\ref{eq:hypgeom}) with indicial roots $r_1-\frac k{12}$ and $r_2-\frac k{12}$. The indicial polynomial in question is
\[
  \left(\theta_K - r_1 + \frac k{12}\right)\left(\theta_K - r_2 + \frac k{12}\right) = \theta_K^2 - \frac{1}{6}\theta_K + a.
\]
It follows that
\begin{align*}
  k &= 6(r_1 + r_2) - 1,\\
  a & = \left(r_1 - \frac k{12}\right)\left(r_2 - \frac k{12}\right).
\end{align*}
To solve equation (\ref{eq:hypgeom}) we express it in the form given by Beukers-Heckmann \cite{BeuHec}:
\[\left(\left(\theta_K +\frac{r_1-r_2}{2} +\frac{11}{12}-1\right)\left(\theta_K +\frac{r_2-r_1}{2} +\frac{11}{12}-1\right) - K\theta_K\left(\theta_K + \frac 13\right)\right)f = 0.\]
Since $\rho(T)$ has distinct eigenvalues, $r_1 - r_2$ is not an integer, and a basis of solutions to (\ref{eq:hypgeom}) near $K = 0$ is given in terms of hypergeometric series. It follows that, at the possible cost of replacing $\rho$ by an equivalent representation to account for the choice of a particular basis of solutions to equation (\ref{eq:hypgeom}), one can choose a minimal weight form $F = \hspace{0mm}^t(f_1,f_2)$ for $\rho$ so that 
\begin{align*}
f_1 &= \eta^{2k}K^{\frac{6(r_1-r_2)+1}{12}}\hspace{0mm}_2F_1\left(\frac{6(r_1-r_2)+1}{12},\frac{6(r_1-r_2)+5}{12};r_1-r_2+1;K\right),\\
f_2 &= \eta^{2k}K^{\frac{6(r_2-r_1)+1}{12}}\hspace{0mm}_2F_1\left(\frac{6(r_2-r_1)+1}{12},\frac{6(r_2-r_1)+5}{12};r_2-r_1+1;K\right).
\end{align*}
These identities were exploited in \cite{FraMas} to study arithmetic properties of Fourier coefficients of vvmfs.

\subsection{Numerical examples with $_2F_1$}
\label{ss:2f1}
Connections between hypergeometric series and modular forms have appeared in many places in the literature.\ Rather than provide an exhaustive survey, in this section we give one simple example and then explain how several known results can be viewed through the optic of vvmfs.\ For notation and results concerning Eisenstein series, consult Appendix \ref{a:eis}.

\begin{ex}
$\Gamma$ acts on Eisenstein series for $\Gamma(4)$ through the quotient $\Gamma/\pm \Gamma(4) \cong S_4$.\ This action coincides with the permutation action of $\Gamma$ on the six cusps $0,~ \frac 12,~ 1,~2,~3,~ \infty$ of $\Gamma(4)$.\ The cusp permutation representation decomposes into irreducibles of dimension $1$, $2$ and $3$.\ In Section \ref{ss:3f2} we treat the $3$-dimensional irreducible; here we discuss the $2$-dimensional irreducible.\ This $2$-dimensional $\rho$ is spanned by $b_1 = [0]+[\frac 12]-2[1]+[2]-2[3]+[\infty]$ and $b_2 = [\frac 12]-[1]-[3]+[\infty]$. In the basis $(b_1,b_2)$ one has
\[
  \rho(T) = \twomat{-2}{-1}{3}{2},
\]
and hence $r_1 = 0$, $r_2 = \frac 12$. This shows that the minimal weight is $2$.\ Thus, the series
\begin{align*}
f_1 &= \eta^{4}K^{- \frac{1}{6}}\hspace{0mm}_2F_1\left( - \frac{1}{6},\frac{1}{6};\frac 12;K\right),&
f_2 &= \eta^{4}K^{\frac{1}{3}}\hspace{0mm}_2F_1\left(\frac{1}{3},\frac{2}{3};\frac 32;K\right),\\
 &= 1 + 24q + 24 q^2 + 96q^3 + 24 q^4 + \cdots, &&=q^{1/2}(1 + 4q + 6q^{2} + 8q^{3} + 13q^{4} + \cdots),
\end{align*}
define two modular forms of weight $2$ on $\Gamma(4)$, and they make up the coordinates of a minimal weight form in $\cH(\rho')$ for a representation $\rho'$ equivalent to $\rho$. Note that $f_1$ spans the $1$-dimensional space of modular forms of weight $2$ on $\Gamma_0(2)$.

Let $g_1$ and $g_2$ denote the linear combinations of the Eisenstein series $G_P$ on $\Gamma(4)$ corresponding to the basis elements $b_1$ and $b_2$ (cf. Appendix \ref{a:eis}). One computes that
\begin{align*}
  g_1 &= \frac 12 -12q_4^2+12q_4^4-48q_4^6+12q_4^8-72q_4^{10}+48q_4^{12} -96q_4^{14}+\cdots,\\
  g_2 &= -8q_4^2-32q_4^6-48q_4^{10}-64q_4^{14}-104q_4^{18}-96q_4^{22}-112q_4^{26} + \cdots,
\end{align*}
and hence $f_1 = 2g_1-3g_2$ and $f_2 = (-1/8)g_2$.
\end{ex}

\begin{ex} 
For each integer $k$, consider the following MLDE, which is studied in the papers \cite{KanKoi1}, \cite{KanKoi2} and \cite{Tsu}:
\begin{eqnarray}\label{KZE}
\label{MLDEk}
\left(D_{k}^2 - \frac{k(k+2)E_4}{144}\right)f=0.
\end{eqnarray}
 The nature of the vvmf $F = {}^t(f_1,f_2)$ determined by (\ref{KZE}) depends very much on the congruence class of $k\pmod 6$.\
Let $\rho$ denote the representation furnished by the solutions of (\ref{KZE}).\ Over the $K$-line (upon passing to weight zero by dividing by $\eta^{2k}$, as  already discussed), it becomes the hypergeometric differential equation (\ref{eq:hypgeom}) with $a= -k(k+2)/144$ and indicial roots $(k+2)/12, -k/12$.\ The indicial roots of (\ref{KZE}) are therefore $(k+1)/6$ and $0$, and the eigenvalues of $\rho(T)$ are $e^{2\pi i(k+1)/6}$ and $1$ respectively\footnote{In this case the roots need not lie between $0$ and $1$, as we are dealing with forms that need not be of minimal weight for the representation $\rho$ furnished by the solutions of (\ref{KZE}).}.\ The indicial roots differ by an integer if and only if $k \equiv -1 \pmod{6}$. If this is 
\emph{not} satisfied then we obtain the following basis of solutions to (\ref{KZE}):
\begin{align*}
f_1 &= \eta^{2k}K^{\frac{k+2}{12}}\hspace{0mm}_2F_1\left(\frac{k+2}{12},\frac{k+6}{12};\frac{k+7}{6};K\right),\\
f_2 &= \eta^{2k}K^{\frac{-k}{12}}\hspace{0mm}_2F_1\left(\frac{-k}{12},\frac{4-k}{12};\frac{5-k}{6};K\right).
\end{align*}

\emph{Case 1}: $k+1\equiv 2, 3, 4\pmod 6$.\ In this case $\rho(T)$ has order $N = 2$ or $3$, whence $\ker \rho$ contains $\Gamma(N)$ and the solutions of (\ref{KZE}) are scalar forms of level $N$ and weight $k$.

\emph{Case 2}: $k+1\equiv \pm 1 \pmod 6$.\ Subtleties arise in this case because $\rho$ is \emph{indecomposable} and \emph{not irreducible} (\cite{Mas1},  \cite{MarMas}).\ We may assume  that 
$\rho$ is \emph{upper triangular}, so that
\begin{eqnarray}\label{indec}
\rho(\gamma) = \left(\begin{array}{cc}\alpha(\gamma) & \beta(\gamma) \\0 & \delta(\gamma)\end{array}\right)\ (\gamma \in \Gamma)
\end{eqnarray}
where $\{\alpha, \delta\}$ are the $1$-dimensional representations $\{1, \chi^{2(k+1)}\}$ ($\chi$  as in (\ref{eq:chidef})).\ The two choices
$\alpha=1$ or $\delta=1$ furnish \emph{inequivalent} representations of $\Gamma$, as do the choices of sign in the congruence for $k+1$.\
Thus $\rho$ may be any one of four representations of $\Gamma$.\
Let $\rho'$ denote one of these four representations, and let $F'$ be a nonzero holomorphic
 vvmf in $\mathcal{H}_{k_0}(\rho')$ of minimal weight $k_0$, say.\ The different choices of $\rho'$ are distinguished as follows (\cite{MarMas}, Section 4): 
 \begin{enumerate}
\item[(a)] $k_0=0, F'=\ ^t(1, 0),\ ^t(g_1, \eta^{4})$ is a free basis of $\mathcal{H}(\rho)$, $\alpha=1, k+1\equiv 1\pmod 6$;
\item[(b)] $k_0=0, F'=\ ^t(1, 0),\ ^t(g_1, \eta^{20})$ is a free basis of $\mathcal{H}(\rho)$, $\alpha=1, k+1\equiv -1\pmod 6$;
\item[(c)] $F', DF'$ is a free basis of $\mathcal{H}(\rho)$, $k_0=0$,  $\delta=1, k+1\equiv 1 \pmod 6$;
\item[(d)] $F', DF'$ is a free basis of $\mathcal{H}(\rho)$, $k_0=4$,  $\delta=1, k+1\equiv -1 \pmod 6$.
\end{enumerate}  

Using this information, it follows that $\rho$ corresponds to cases (c) or (d).\ Let $F\df {}^t(f_1, f_2)\in \cH_k(\rho)$ correspond to a fundamental system of solutions of (\ref{MLDEk}).\ Because 
$\delta=1$, the functional equation $\rho(\gamma)F = F|_{k}\gamma$ tells us that $f_2$ is a weight $k$ scalar
form on $\Gamma$.\ It is easy to see that up to normalization, $f_2(\tau)$ is the unique solution of (\ref{MLDEk}) that is a scalar form.\ This form is denoted $F_k$ by Kaneko-Zagier (\cite{KanZag}, especially  Section 8) and they show that for $p=k+1$ a prime larger than $5$,  $F_k$ is related to supersingular elliptic curves (mod $p$).\ More precisely, choose integers $m$, $\delta$ and $\veps,$ where $0 \leq \delta \leq 3$ and $0\leq \veps \leq 2$, so that $F_k\Delta^{-m}E_4^{-\delta}E_6^{-\veps}$ is of weight zero.\ This means that one can write
\[F_k\Delta^{-m}E_4^{-\delta}E_6^{-\veps} = f(j)\]
for a rational function $f(j) \in \QQ(j)$.\ Kaneko-Zagier show that if $p > 5$ is prime and $k = p-1$, then the mod $p$ reduction of $f$ is equal (up to a simple factor) to the polynomial over $\FF_p$ encoding the supersingular $j$-invariants (recall that there are finitely many such $j$-invariants and they are all defined over $\FF_{p^2}$).\ This example of Kaneko-Zagier was instrumental in the author's understanding, exploited in \cite{FraMas},  of the importance of hypergeometric series for the arithmetic of vvmfs.

\begin{rmk}
Remark 2 on page 151 of \cite{KanKoi1} observes that $f_1$ does not appear to be a modular form, based on computational evidence suggesting that it has unbounded denominators.\ That this coordinate is not modular indeed follows from the fact that $F$ is a vvmf for an \emph{indecomposable} representation of $\Gamma$ that is not irreducible: if $f_1$ was also modular, then the representation corresponding to $F$ would necessarily contain a congruence subgroup in its kernel, and it would thus necessarily be a completely reducible representation.\ Moreover, the fact that the Fourier coefficients of $f_1$ do indeed have unbounded denominators follows from \cite{FraMas}.
\end{rmk}

\emph{Case 3}: $k+1\equiv 0\pmod 6$.\ Here, $\rho(T)$ has both eigenvalues equal to $1$, so that $\rho$ is equivalent to the
standard $2$-dimensional representation of $\Gamma$ (denoted by $M_1$ in Section \ref{SSDandP}).\ Since $\rho(T)$ is not semisimple we are here confronted with a case when the associated vvmfs
will be logarithmic.\ Indeed, a nonvanishing vvmf of least weight $k_0 =-1$ is $F_0:=\ ^t(\tau, 1)$.\
The free module theorem still applies in this situation (cf. \cite{KnoMas5}), and we can take as free basis for $\mathcal{H}(\rho)$ the vvmfs 
\[F_0,\quad\quad D_{-1}F_0=\ ^t\left(\frac{1}{2\pi i}+\frac{\tau}{12}E_2,\frac{1}{12}E_2 \right).\] 
Thus $\mathcal{H}_{\ell}(\rho)=0$ for \emph{even} $\ell$, while for $m\geq 0$, $\mathcal{H}_{2m-1}(\rho)$ is spanned by vvmfs of the shape
\begin{eqnarray*}
\left(\begin{array}{c}\frac{1}{2\pi i}G_{2m-2}+\tau(F_{2m}+\frac{1}{12}G_{2m-2}E_2) \\ F_{2m}+\frac{1}{12}G_{2m-2}E_2\end{array}\right)
\end{eqnarray*}
(compare with (\ref{logvvmf})) where $F_{2m}, G_{2m-2}$ are scalar forms of weights $2m$ and $2m-2$ respectively.\ We can draw several conclusions from this.\ First,
 \emph{every} vvmf of weight $2m-1$ associated to $\rho$ has as second component a quasimodular form of weight $2m$ and depth at most $1$.\
 In particular, this applies to the solutions of  (\ref{KZE}), in which  case Kaneko-Koike \cite{KanKoi1} give an explicit formula for the quasimodular form.\ On the other hand, the construction shows that \emph{every} quasimodular form of depth at most $1$ occurs in this way, and defines
 a  \emph{bijection} between quasimodular forms of weight $2m$ and depth at most $1$ and vvmfs of weight $2m-1$ associated to $\rho$.
\end{ex}

\subsection{The monic MLDE of degree $3$}
\label{ss:solvingmlde}
Let $\rho$ be a $3$-dimensional irreducible representation of $\Gamma$ such that $\rho(T)$ has finite order.\ In this case $\rho(T)$ necessarily has distinct eigenvalues (\cite{Mar}, \cite{TubWen}).\ Let $F,G,H$ be a free $\CC[E_4,E_6]$-basis for $\cH(\rho)$ of weights $k \leq l \leq m$, respectively.\ By the discussion of Section \ref{SSMLDE}, since $\rho$ is irreducible $F$ does not satisfy an MLDE of order \emph{less} than $3$, in particular $DF\not= 0$.\ Since there are no holomorphic modular forms of weight $2$, it follows that $DF$ is a nonzero constant multiple of $G$ or $H$.\ This also shows that if $k=l$ then $F, G, DF, DG$ are linearly independent over $\CC[E_4, E_6]$, an impossibility because the rank is $3$.\ So we may take $G=DF$, in particular $l=k+2$.\ Now $D^2F$ cannot be a combination of $F$ and $DF$ with $\CC[E_4, E_6]$-coefficients (otherwise it solves an MLDE of order $2$), so we must have $D^2F=\alpha H+\beta E_4F$ for $\alpha, \beta \in \CC, \alpha\not= 0$.\ Then we can replace $H$ by $D^2F$ if necessary, and in this way we see that $F, DF, D^2F$ is a basis of $\mathcal{H}(\rho)$. As in the $2$-dimensional case, we deduce that the coordinates of $F$ satisfy an MLDE of the form
\begin{equation}
\label{eq:deg3}
(D^3_k+a E_4D_k+b E_6)f=0.
\end{equation}
When $k = 0$ this corresponds to the generalized hypergeometric equation
\[
\left(\theta_K^{3} - \left(\frac{2K+1}{2(1-K)}\right) \theta_K^{2}  + \left(\frac{18a+1-4K}{18(1-K)} \right) \theta_K + \frac{b}{1-K}\right)f = 0.
\]

Let $r_1,r_2$ and $r_3$ denote the exponents of $\rho(T)$, where as in the $2$-dimesional case we take $0 \leq r_i < 1$ for all $i$ (cf.\ Theorem 1.3 of \cite{MarMas}).\ The roots of the indicial equation near $K = 0$ of the differential equation satisfied by $G = \eta^{-2k}F$ are then $r_i - \frac{k}{12}$, so that
\[
  \left(\theta_K - r_1+\frac{k}{12}\right)\left(\theta_K - r_2+\frac{k}{12}\right)\left(\theta_K - r_3+\frac{k}{12}\right) = \theta_K^3 -\frac{1}{2}\theta_K^2 + \left(a + \frac{1}{18}\right)\theta_K + b.
\]
This shows that $k = 4(r_1+r_2+r_3)-2$ and one can similarly solve for $a$ and $b$ in terms of the exponents. Three linearly independent solutions of equation (\ref{eq:deg3}) are given in terms of generalized hypergeometric series as follows:
\begin{equation}
\label{hypgeomser}
\eta^{2k}K^{\frac{a_i+1}{6}}\hspace{0mm} _3F_{2}\left(\frac{a_i+1}{6},\frac{a_i+3}{6}, \frac{a_i+5}{6}; r_i-r_j+1, r_i-r_k+1; K\right),
\end{equation}
for $i = 1,2,3$, where for $\{i,j,k\} = \{1,2,3\}$ we write $a_i = 4r_i-2r_j-2r_k$. After possibly exchanging $\rho$ with an equivalent representation, these series form the coordinates of a nonzero vvmf of lowest weight $k = 4(r_1+r_2+r_3)-2$ for $\rho$.
\begin{rmk}
For $\rho$ of dimension $4$ and higher it need not be true that there exists a free basis for $\cH(\rho)$ of the form $F, DF,\ldots, D^{n}F$. This complicates the matter of computing vvmfs in higher dimensions.
\end{rmk}

\subsection{Numerical examples with $_3F_2$}
\label{ss:3f2}
We consider some explicit $3$-dimensional irreducible representations of $\Gamma$.\ Fix an integer $N \geq 1$ and let $\bar{\Gamma}(N):=\Gamma/\pm\Gamma(N)$.\ If $A \in \Gamma$ then we write $\bar{A}$ for its image in $\bar\Gamma(N)$.\ For $N=3, 4, 5, 7$ the quotient $\bar\Gamma(N)$ is isomorphic to $A_4, S_4, A_5, \PSL_2(7)$ respectively, the latter group being the simple group of order $168$.\ Each of these groups have faithful irreducible representations $\rho$ of dimension $3$, and we seek the least integer $k_0$ (necessarily positive and even) for which $\rho$ is realized by the action of $\bar{\Gamma}(N)$ on a space of holomorphic modular forms on $\pm \Gamma(N)$ of weight $k_0$.

The group $\Gamma(N)$ has $(N^2-1)/2$ cusps when $N$ is an odd prime, and $6$ cusps when $N=4$. In each case the quotient $\bar{\Gamma}$ permutes these cusps transitively. Thus, for $k\geq 4$ an even integer, the dimension of the space $\Eis_k(N)$ of weight $k$ Eisenstein series of level $N$ is $(N^2-1)/2$ or $6$, while for $k=2$ it is $(N^2-3)/2$ or $5$. If $k\geq 4$, there is a $1$-dimensional subspace in $\Eis_k(1)$ spanned by the Eisenstein series of level $1$.\ So for all even $k\geq 2$  there is a subspace $V_k(N)\subseteq \Eis_k(N)$ of dimension $(N^2-3)/2$ or $5$ that admits a natural action of $\bar{\Gamma}(N)$ and contains no $\bar{\Gamma}(N)$-invariants. Considered as a $\bar{\Gamma}(N)$-module, the isomorphism class of $V_k(N)$ is independent of $k$. 

When $N$ is an odd prime, the action of $\bar{\Gamma}(N)$ on the cusps is the same as its conjugation action on the Sylow $N$-subgroups of $\bar{\Gamma}(N)$ (this is because both actions are transitive, and $\bar{T}$ generates both a Sylow $N$-subgroup and the stabilizer of the infinite cusp).\ Since the normalizer of a Sylow $N$-subgroup in $\bar{\Gamma}(N)$ has order $N(N-1)/2$, this means that the trace of $\bar T$ acting on cusps is just $(N-1)/2$. Its trace on $V_k(N)$ is therefore $(N-3)/2$.\ Using this information, we can decompose $V_k(N)$ into irreducible $\bar{\Gamma}(N)$-modules, at least for the odd prime values of $N$ in front of us. When $N=4$, note that $\bar T$ has trace $1$ on $V_k(N)$.

\begin{itemize}
\item $N = 3$:\ In this case $V_k(3)$ is the unique irreducible of $A_4$ of dimension $3$.

\item $N = 4$: Here $\dim V_k(4)=5$ and $\Tr_{V_k(N)} \bar T =1$.\ The nontrivial irreducibles for $\bar{\Gamma}(4) \cong S_4$ are $1, 2, 3_1, 3_2$, where $\bar T$ has trace $-1, 0, 1, -1$ respectively.\ Thus $V_k(4)\cong 2\oplus 3_1$.\ The dimension of the space $M_4(4)$ of holomorphic modular forms of level $4$ and weight $4$ is $9$, and
\[
  M_4(4) = S_4(4) \oplus V_4(4)\oplus \Eis_4(1)
\]
with $S_4(4)$  the $3$-dimensional space of cusp-forms on $\Gamma(4)$.\ In its action on $S_4(4)$,  $\bar T$ has no eigenvalue $1$, and not all eigenvalues are $-1$  (there are no nonzero cusp-forms on $\Gamma_1(4)$ or $\Gamma(2)$ of weight $4$) so its eigenvalues must be $\pm i, -1$, with trace $-1$. Hence $S_4(4)\cong 3_2$.

\item $N = 5$: Here $\dim V_k(5)=11$ and $\Tr_{V_k(5)}\bar T=1$.\ The nontrivial irreducibles for $\bar{\Gamma}(5) \cong A_5$ are $3_1, 3_2, 4, 5$, and $\bar T$ has trace $\alpha, \bar{\alpha}, -1, 0$ respectively ($\alpha+\bar{\alpha}=1$).\ Thus $V_k(5) = 3\oplus \bar{3}\oplus 5$.

\item$N = 7$: Here $\dim V_k(7) =23$ and $\Tr_{V_k(7)}\bar T=2$.\ The nontrivial irreducibles of $\bar{\Gamma}(7) \cong \PSL_2(7)$ are $3_1, 3_2, 6, 7, 8$, and $\bar T$ has trace $\beta, \bar{\beta}, -1, 0, 1$ respectively ($\beta+\bar{\beta}=-1$). So $V_k(7) = 7\oplus 8\oplus 8$ contains no $3$-dimensional irreducibles.\  On the other hand $\Gamma(7)$ has genus $3$, so $\dim S_2(7) = 3$ and the space of cusp-forms is irreducible for $\bar{\Gamma}(7)$ (since it contains no $\Gamma$-invariants).\ This is the representation $\rho$ denoted $3_1$.\ If $\rho(\bar T)=\ddiag(e^{2\pi ir_1}, e^{2\pi ir_2}, e^{2\pi ir_3})$ then we see that $r_1+r_2+r_3\in \ZZ$, and since $\rho(\bar T)$ has order $7$ then $(r_1, r_2, r_3)=(1/7, 2/7, 4/7)$ or $(r_1,r_2,r_3) = (3/7,5/7,6/7)$ (cf. \cite{Mar} or \cite{TubWen}). The minimal weights $k_0$ are $2$ and $6$ in these cases, respectively. We will call the first case $3_1$ and the second case $3_2$.
\end{itemize}

For ease of reference we summarize the information above in a table (recall that $k_0 = 4(r_1+r_2+r_3)-2$).
\begin{center}\begin{tabular}{c|c|c|c|c|c}
$N$&$\rho$ & $k_0$ & $r_1$ & $r_2$ & $r_3$ \\
\hline
3&3&2&0&1/3&2/3\\
4&$3_1$&2&0&1/4&3/4\\
4&$3_2$&4&1/2&1/4&3/4\\
5&$3_1$&2&0&1/5&4/5\\
5&$3_2$&2&0&2/5&3/5\\
7&$3_1$&2&1/7&2/7&4/7\\
7&$3_2$&6&3/7&5/7&6/7
\end{tabular}
\end{center}
We next show how the vvmfs corresponding to these representations may be used to express certain Eisenstein series (cf. Appendix \ref{a:eis} for notation) in terms of hypergeometric functions, to compute certain linear combinations between partial zeta values, and to prove surprising polynomial identities between hypergeometric series.

\begin{ex}[The case $N = 3$, $k_0 = 2$.]
\label{ex:N3}
In this case the coordinates of a lowest weight form furnish a basis for the space of modular forms of weight $2$ for $\Gamma(3)$:
\begin{align*}
f_1 &=1728^{1/6}\eta^4K^{-1/6}\ _{3}F_2(-1/6, 1/6, 1/2; 2/3, 1/3; K)\\
&=1 + 12q + 36q^{2} + 12q^{3} + 84q^{4} + 72q^{5} + 36q^{6} +\cdots,\\
f_2 &=1728^{-1/6}\eta^4K^{1/6}\ _{3}F_2(1/6, 1/2, 5/6; 4/3, 2/3; K)\\
&=q^{\frac 13}(1 + 7q + 8q^{2} + 18q^{3} + 14q^{4} + 31q^{5} + 20q^{6} +\cdots),\\
f_3 &=1728^{-1/12}\eta^4K^{1/2}\ _{3}F_2(1/2, 5/6, 7/6; 5/3, 4/3; K)\\
&=q^{\frac 23}(1 + 2q + 5q^{2} + 4q^{3} + 8q^{4} + 6q^{5} + 14q^{8} + \cdots).
\end{align*}
Representatives for the cusps are given by $0$, $1$, $2$ and $\infty$.\ A basis for the $3$-dimensional irreducible in the corresponding permutation representation of the cusps is given by $([0] - [\infty], [1]- [\infty], [2] - [\infty])$, where the square brackets denote the $\Gamma(N)$-equivalence class of a cusp. Another basis for $M_2(\Gamma(3))$ is given by the Eisenstein series:
\begin{align*}
  g_1 &= G_{0} - G_{\infty} = \frac{1}{3} + q_N + 3q_N^2 + 4q_N^3 + 7q_N^4 + 6q_N^5+12q_N^6 + 8q_N^7+15q_N^8+\cdots,\\
  g_2 &= G_{1} - G_{\infty} = (1-\zeta_N)q_N + 3(2 + \zeta_N)q_N^2 + 7(1-\zeta_N)q_N^4+6(2+\zeta_N)q_N^5 + \cdots,\\
  g_3 &= G_{2} - G_{\infty} = (2+\zeta_N)q_N + 3(1 - \zeta_N)q_N^2 + 7(2+\zeta_N)q_N^4+6(1-\zeta_N)q_N^5 + \cdots,\\
\end{align*}
where $\zeta_N = e^{2\pi i/N}$. One deduces that:
\begin{align*}
  f_1 &= 3g_1-g_2-g_3, & f_2 &=3^{-1}((\zeta_N+1)g_2-\zeta_Ng_3), & f_3 &=3^{-2}(-\zeta_Ng_2+(\zeta_N+1)g_3).
\end{align*}
Inverting these identities allows one to express these linear combinations of Eisenstein series in terms of $\eta$, $K$ and generalized hypergeometric series.\ Note also that the constant term of $g_1$ is a partial zeta value.\ By expressing $g_1$ in terms of $f_1$, $f_2$ and $f_3$ using only the higher coefficients of these forms, one can easily solve for the value of this partial zeta function (of course there are other ways to compute such quantities).\ We give a more exciting example of this sort in Example \ref{ex:N5} below.
\end{ex}

\begin{ex}[The cases $N = 4$, $k_0 = 2,4$] 
\label{ex:N4}
The coordinates of a lowest weight form for the representation $3_1$ yields the following weight-$2$ modular forms:
\begin{align*}
f_1 &= 1728^{1/6}\eta^{4}K^{-1/6}\  _{3}F_2(-1/6, 1/6, 1/2; 3/4, 1/4; K),\\
f_2&=1728^{-1/12}\eta^{4}K^{1/12}\  _{3}F_2(1/12, 5/12, 3/4; 5/4, 1/2; K),\\
f_3&=1728^{-7/12}\eta^{4}K^{7/12}\  _{3}F_2(7/12, 11/12, 5/4; 7/4, 3/2; K).
\end{align*}
Representatives for the cusps of $\Gamma(4)$ are given by $0$,$1/2$, $1$,$2$,$3$ and $\infty$. A basis for the $3$-dimesional irreducible in the corresponding permutation representation is given by $([0]-[2],[1/2]-[\infty],[1]-[3])$.  The corresponding Eisenstein series are:
\begin{align*}
  g_1 &= G_{0} - G_{2}, & g_2 &= G_{1/2} - G_{\infty}, &  g_3 &=G_{1} - G_{3}.
\end{align*}
One computes that
\begin{align*}
  f_1 &= 2g_1,& f_2 &= 2^{-2}(g_2 - \zeta_N^{-1}g_3), &f_3 &= 2^{-3}(g_2 + \zeta_N^{-1}g_3).
\end{align*}
Note that $g_1$ is the Eisenstein series of weight $2$ on $\Gamma_0(4)$.

The coordinates of a lowest weight form for the representation $3_2$ yields the following weight-$4$ modular forms:
\begin{align*}
f_4&=1728^{-1/6}\eta^{8}K^{1/6}\ \  _{3}F_2(1/6, 1/2, 5/6; 5/4, 3/4; K) = f_2^2 - 16f_3^2,\\
f_5&=1728^{1/12}\eta^{8}K^{-1/12}\  _{3}F_2(-1/12, 1/4, 7/12; 3/4, 1/2; K)= f_1f_2,\\
f_6&=1728^{-5/12}\eta^{8}K^{5/12}\ \  _{3}F_2(5/12, 3/4, 13/12; 5/4, 3/2; K) = f_1f_3.
\end{align*}
These polynomial identities between modular forms yield the following nonobvious relations between generalized hypergeometric series.
\begin{prop}
The following identities hold:
\begin{align*}
 \ \  _{3}F_2\left(\frac 16, \frac 12, \frac 56; \frac 54, \frac 34; K\right) &= \  _{3}F_2\left(\frac{1}{12}, \frac{5}{12}, \frac{3}{4}; \frac{5}{4}, \frac{1}{2}; K\right)^2 - \frac{K}{108}\  _{3}F_2\left(\frac{7}{12}, \frac{11}{12}, \frac{5}{4}; \frac{7}{4}, \frac{3}{2}; K\right)^2,\\
\  _{3}F_2\left(-\frac{1}{12}, \frac{1}{4}, \frac{7}{12}; \frac{3}{4}, \frac{1}{2}; K\right) &= \  _{3}F_2\left(-\frac{1}{6}, \frac{1}{6}, \frac{1}{2}; \frac{3}{4}, \frac{1}{4}; K\right)\  _{3}F_2\left(\frac{1}{12}, \frac{5}{12}, \frac{3}{4}; \frac{5}{4}, \frac{1}{2}; K\right), \\
\ \  _{3}F_2\left(\frac{5}{12}, \frac 34,\frac{13}{12}; \frac 54, \frac 32; K\right)&= \  _{3}F_2\left(-\frac 16, \frac 16, \frac 12; \frac 34, \frac 14; K\right) \  _{3}F_2\left(\frac{7}{12}, \frac{11}{12}, \frac{5}{4}; \frac{7}{4}, \frac{3}{2}; K\right).
\end{align*}
\end{prop}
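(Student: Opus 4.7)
The three stated identities are equivalent, upon substitution of the explicit hypergeometric expressions for $f_1,\ldots,f_6$ given in Example~\ref{ex:N4}, to the following polynomial identities among weight-$4$ holomorphic modular forms on $\Gamma(4)$:
\[
  f_4 = f_2^2 - 16 f_3^2,\qquad f_5 = f_1 f_2,\qquad f_6 = f_1 f_3.
\]
(The factor $-K/108$ in the first identity encodes $-16/1728$.) My plan is to prove these three polynomial identities via the uniqueness of the $3_2$-isotypic component in $M_4(\Gamma(4))$, and then rewrite everything in terms of hypergeometric series.

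Let $V = \mathrm{span}(f_1,f_2,f_3) \subset M_2(\Gamma(4))$; by construction $V$ carries the representation $3_1$ of $\bar\Gamma(4) \cong S_4$. A short character computation in $S_4$ gives $\Sym^2(3_1) \cong 1 \oplus 2 \oplus 3_2$; in particular, $\Sym^2(V)$ contains a unique $3_2$-subrepresentation. The multiplication map $\Sym^2(V) \to M_4(\Gamma(4))$ is $\bar\Gamma(4)$-equivariant, and from Example~\ref{ex:N4} we know that $M_4(\Gamma(4)) \cong 1 \oplus 2 \oplus 3_1 \oplus 3_2$, the $3_2$-summand being precisely the cusp space $S_4(\Gamma(4)) = \mathrm{span}(f_4,f_5,f_6)$. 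So, assuming the image of the $3_2$-summand of $\Sym^2(V)$ is nonzero, it must coincide with $\mathrm{span}(f_4,f_5,f_6)$.

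To identify the $3_2$-summand of $\Sym^2(V)$ explicitly, use that $\rho(T) = \diag(1,i,-i)$ on $(f_1,f_2,f_3)$. The quadratic monomials split into $T$-eigenspaces with eigenvalues $+1$ (spanned by $f_1^2, f_2 f_3$), $-1$ (spanned by $f_2^2, f_3^2$), $i$ (spanned by $f_1 f_2$), and $-i$ (spanned by $f_1 f_3$). Since the $3_2$-representation has $T$-eigenvalues $\{-1,i,-i\}$ (matching its exponents $1/2, 1/4, 3/4$), the $i$- and $-i$-components of the $3_2$-summand are forced to be $f_1 f_2$ and $f_1 f_3$, while its $-1$-component is a specific combination $\alpha f_2^2 + \beta f_3^2$. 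Nonvanishing of the image of the $3_2$-summand under multiplication is automatic from $f_1 f_2 \neq 0$.

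The main obstacle is pinning down the ratio $\alpha:\beta = 1:-16$. Two routes seem viable: either compute the action of a second generator of $\bar\Gamma(4)$ (for instance $\bar S$) on the basis $(f_1,f_2,f_3)$ and use the abstract $3_2$-structure to determine the $-1$-eigenvector, or, more directly, impose cuspidality: since the $3_2$-image lies in $S_4(\Gamma(4))$, the combination $\alpha f_2^2 + \beta f_3^2$ must vanish at every cusp of $\Gamma(4)$ other than $\infty$ (where it vanishes for free by the leading $q$-powers). Using the expressions of $f_2, f_3$ in terms of the Eisenstein series $G_P$ recorded in Example~\ref{ex:N4}, one reads off the constant terms at a nontrivial cusp and solves for the ratio. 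Once this is done, matching the leading Fourier coefficients of each pair yields the three polynomial identities with the precise normalization, and substituting back the hypergeometric expressions produces the claimed identities.
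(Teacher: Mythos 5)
Your reduction of the three hypergeometric identities to the polynomial identities $f_4 = f_2^2 - 16f_3^2$, $f_5 = f_1f_2$, $f_6 = f_1f_3$ is exactly the reduction the paper makes (and the bookkeeping $-K/108 = -16K/1728$ is right), but beyond that point the paper offers no argument at all --- it simply asserts the polynomial identities inside Example \ref{ex:N4} --- so your representation-theoretic justification is genuinely new content rather than a variant of the paper's proof. The argument is sound: with the paper's labelling ($3_1$ is standard$\otimes$sign and $3_2$ is the standard representation, as forced by the traces of $\bar T$) one does have $\Sym^2(3_1)\cong 1\oplus 2\oplus 3_2$; multiplication is equivariant; $3_2$ occurs with multiplicity one in $M_4(\Gamma(4))$, namely as $S_4(4)=\operatorname{span}(f_4,f_5,f_6)$; and since the $T$-eigenvalues $i$ and $-i$ each occur once among the quadratic monomials and only in the $3_2$-summand of $\Sym^2(V)$, those components are forced to be $f_1f_2$ and $f_1f_3$, with nonvanishing (hence an isomorphism onto the cusp space, by irreducibility) guaranteed by $f_1f_2\neq 0$; the normalizations then drop out of the leading Fourier coefficients. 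The only step you defer is the ratio $\alpha:\beta=1:-16$, and both of your proposed routes close it: the cuspidality route is logically watertight because $S_4(4)\cong 3_2$ has a one-dimensional $(-1)$-eigenspace for $T$, so $f_2^2$ and $f_3^2$ cannot both be cuspidal and vanishing at a suitable cusp is a nontrivial linear condition on $(\alpha,\beta)$. A marginally shorter finish, closer to the paper's machinery: by the free-module theorem and Section \ref{ss:solvingmlde} the minimal weight for $3_2$ is $4$ and $\dim\cH_4(3_2)=1$, so the weight-$4$ vvmf built from the quadratic expressions must be proportional to the hypergeometric one coordinatewise, and all constants are fixed by comparing the first two Fourier coefficients.
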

\end{ex}

\begin{ex}[The case $N = 5$, $k_0 = 2$]
\label{ex:N5}
In this case $M_2(5)$ contains two irreducible $3$-dimensional representations of $\Gamma$.\ The coordinates of a lowest weight form with respect to one of the representations are:
\begin{align*}
f_1&=1728^{1/6}\eta^{4}K^{-1/6}\  _{3}F_2(-1/6, 1/6, 1/2; 4/5, 1/5; K), \\
f_2&= 1728^{-1/30}\eta^{4}K^{1/30}\ \  _{3}F_2(1/30, 11/30, 7/10; 6/5, 2/5; K),\\
f_3 &=1728^{-19/30}\eta^{4}K^{19/30}\  _{3}F_2(19/30, 29/30, 13/10; 9/5, 8/5; K).
\end{align*}
The second representation yields the forms:
\begin{align*}
f_4&=1728^{1/6}\eta^{4}K^{-1/6}\  _{3}F_2(-1/6, 1/6, 1/2; 3/5, 2/5; K), \\
f_5 &=1728^{-7/30}\eta^{4}K^{7/30}\  _{3}F_2(7/30, 17/30, 9/10; 7/5, 4/5; K),\\
f_6&=1728^{-13/30}\eta^{4}K^{13/30}\  _{3}F_2(13/30, 23/30, 11/10; 8/5, 6/5; K).
\end{align*}
All of the forms above occur in the Eisenstein space of level $5$ and weight $2$.\ Representatives for the cusps are given by $0$, $2/5$, $1/2$, $1$, $3/2$, $2$, $5/2$, $3$, $7/2$, $4$, $9/2$, $\infty$.\ Let $\zeta = e^{2\pi i/5}$ denote a primitive $5$th root of unity and let $\alpha = \zeta^3+\zeta^2$.\ Then bases for the two $3$-dimensional irreducibles are given by 
\begin{align*}
(&[0]-[1]+\alpha[3/2]+\alpha[2] - [5/2]+[7/2]-\alpha [4] - \alpha [9/2],\\
&[2/5]+ \alpha [1] - [3/2] +\alpha [2] -\alpha [7/2] + [4] -\alpha [9/2] -[\infty],\\
& [1/2] - \alpha [1] -\alpha [3/2] + [2] - [3] + \alpha [7/2] + \alpha [4] - [9/2])
\end{align*}
and
\begin{align*}
(&[0]-[1]-(\alpha+1)[3/2]-(\alpha+1) [2] - [5/2]+[7/2]+(\alpha+1)[4] + (\alpha+1)[9/2],\\
&[2/5]- (\alpha+1) [1] - [3/2] -(\alpha+1)[2] +(\alpha+1) [7/2] + [4] +(\alpha+1)[9/2] -[\infty],\\
& [1/2] + (\alpha+1)[1] +(\alpha+1)[3/2] + [2] - [3] - (\alpha+1) [7/2] - (\alpha+1)[4] - [9/2]).
\end{align*}
Let $g_1$, $g_2$ and $g_3$ denote the linear combinations of Eisenstein series corresponding to the elements of the first basis, and similarly let $g_4$, $g_5$ and $g_6$ denote the linear combinations of Eisenstein series corresponding to the elements of the second basis.\ Then one has
\begin{align*}
f_1 &= -(2\zeta^3+2\zeta^2+1)g_1 - g_2 + g_3, & f_4 &= -(2\zeta^3+2\zeta^2+1)g_4+g_5 - g_6,\\
f_2 &= (1/5)(g_2-\zeta^3g_3),& f_5 &= (1/5)(g_5-\zeta g_6),\\
f_3 &=(1/15)(g_2-\zeta^2g_3),& f_6 &= (1/10)(g_5-\zeta^4g_6).
\end{align*}
The equality of constant terms in the expressions for $f_1$ and $f_4$ yields
\[
  \sum_{m \equiv 1 \pmod{5}} \frac{1}{m^2} - \sum_{m \equiv 2 \pmod{5}} \frac{1}{m^2} = \frac{(2\pi)^2}{25\sqrt{5}}.
\]
Again, there are easier ways to deduce such formulae.
\end{ex}

\begin{ex}[The cases $N = 7$, $k_0 = 2,6$]
\label{ex:N7}
The coordinates of a vvmf of lowest weight for $3_1$ furnish a basis for the space of cusp forms of weight $2$ on $\Gamma(7)$:
\begin{align*}
f_1&=1728^{1/42}\eta^{4}K^{-1/42}\  _{3}F_2(-1/42, 13/42, 9/14; 6/7, 4/7; K) \\
&=q^{\frac 17}(1 - 3q + 4q^{3} + 2q^{4} + 3q^{5} - 12q^{6} - 5q^{7} + \cdots), \\
f_2&= 1728^{-5/42}\eta^{4}K^{5/42}\  _{3}F_2(5/42, 19/42, 11/14; 8/7, 5/7; K)\\
&= q^{\frac 27}(1 - 3q - q^{2} + 8q^{3} - 6q^{5} - 4q^{6} + \cdots),\\
f_3&=1728^{-17/42}\eta^{4}K^{17/42}\  _{3}F_2(17/42, 31/42, 15/14; 10/7, 9/7; K)\\
&=q^{\frac 47}(1 - 4q + 3q^{2} + 5q^{3} - 5q^{4} - 8q^{6} + 10q^{7} +\cdots). 
\end{align*}
The modular curve $X(7)$ is the Klein quartic, and the forms above agree with the forms listed in (4.4) of \cite{Elk} up to sign. In that paper one can also find expressions for $f_1$, $f_2$ and $f_3$ in terms of theta series, and as infinite products. Further, the Klein quartic relation obtained from the canonical embedding of $X(7)$ yields
\[
  f_2^3f_1 = f_1^3f_3 + f_3^3f_2.
\]
This gives the following identity for hypergeometric series.
\begin{prop}[Klein quartic identity]
One has
\begin{align*}
& \  _{3}F_2\left(\frac{5}{42}, \frac{19}{42}, \frac{11}{14}; \frac{8}{7}, \frac{5}{7}; K\right)^3\  _{3}F_2\left(-\frac{1}{42}, \frac{13}{42}, \frac{9}{14}; \frac{6}{7}, \frac{4}{7}; K\right) = \\
&\quad \  _{3}F_2\left(-\frac{1}{42}, \frac{13}{42}, \frac{9}{14}; \frac{6}{7}, \frac{4}{7}; K\right)^3\  _{3}F_2\left(\frac{17}{42}, \frac{31}{42}, \frac{15}{14}; \frac{10}{7}, \frac{9}{7}; K\right) +\\
&\quad  \frac{K}{1728}\  _{3}F_2\left(\frac{17}{42}, \frac{31}{42}, \frac{15}{14}; \frac{10}{7}, \frac{9}{7}; K\right)^3\  _{3}F_2\left(\frac{5}{42}, \frac{19}{42}, \frac{11}{14}; \frac{8}{7}, \frac{5}{7}; K\right).
\end{align*}
\end{prop}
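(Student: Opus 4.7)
My plan is to derive the proposition as an immediate consequence of the Klein quartic relation $f_2^3 f_1 = f_1^3 f_3 + f_3^3 f_2$ in $M_8(\Gamma(7))$ recalled in the example just above the proposition.\ That relation is classical: it expresses the fact that the canonical embedding of $X(7)$ into $\PP^2$ realizes this curve as the smooth plane quartic of Klein, and its compatibility with our $f_i$ (up to sign) is supplied by Elkies \cite{Elk}.\ Given this relation as known, the proof reduces entirely to substituting the hypergeometric expressions for $f_1, f_2, f_3$ and tracking prefactors.

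First I would write each coordinate uniformly as $f_i = 1728^{c_i}\eta^4 K^{d_i}H_i(K)$, where $H_i(K)$ is the ${}_3F_2$ series appearing in the proposition and where, reading off the exponents from the example,
\[
(c_1, c_2, c_3) = \left(\tfrac{1}{42},\, -\tfrac{5}{42},\, -\tfrac{17}{42}\right),\qquad (d_1, d_2, d_3) = \left(-\tfrac{1}{42},\, \tfrac{5}{42},\, \tfrac{17}{42}\right).
\]
Each monomial $f_i^3 f_j$ then acquires the prefactor $1728^{3c_i + c_j}\eta^{16}K^{3d_i + d_j}$ multiplying $H_i(K)^3 H_j(K)$.\ A direct arithmetic check gives $3c_2 + c_1 = 3c_1 + c_3 = -\tfrac13$ and $3d_2 + d_1 = 3d_1 + d_3 = \tfrac13$, while $3c_3 + c_2 = -\tfrac43$ and $3d_3 + d_2 = \tfrac43$.\ Thus the prefactors of the two monomials $f_2^3 f_1$ and $f_1^3 f_3$ collapse to the common value $1728^{-1/3}\eta^{16}K^{1/3}$, while the third term carries $1728^{-4/3}\eta^{16}K^{4/3} = (K/1728)\cdot 1728^{-1/3}\eta^{16}K^{1/3}$.\ Dividing the Klein quartic relation through by the common factor $1728^{-1/3}\eta^{16}K^{1/3}$ and reading off the remaining equality of ${}_3F_2$ products yields exactly the stated identity.

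The main (and essentially only) obstacle is not computational but conceptual: one must be confident that the Klein quartic relation holds for the specific normalization of the triple $(f_1, f_2, f_3)$ produced by our hypergeometric formulas, since the relation is homogeneous of degree $4$ and is therefore preserved under a common rescaling of the three forms but not under independent rescalings.\ The reason it does hold in our setting is that the hypergeometric $f_i$ are, by construction, simultaneous eigenvectors of $\rho(T)$ for the eigenvalues $e^{2\pi i/7}$, $e^{4\pi i \cdot 2/7}$, $e^{2\pi i \cdot 4/7}$ with leading $q$-coefficients normalized to $1$, which is also the normalization of the Klein basis used by Elkies (up to the sign discrepancy flagged in the example).\ Once this matching of bases is accepted, the proof is pure bookkeeping as above.
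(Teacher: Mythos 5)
Your proposal is correct and follows essentially the same route as the paper: the Proposition is obtained by substituting the hypergeometric expressions $f_i = 1728^{c_i}\eta^4 K^{d_i}H_i(K)$ into the Klein quartic relation $f_2^3f_1 = f_1^3f_3 + f_3^3f_2$ (taken from the canonical embedding of $X(7)$ via Elkies) and cancelling the common prefactor $1728^{-1/3}\eta^{16}K^{1/3}$, exactly as you do; your exponent arithmetic checks out. Your additional remark about the normalization-dependence of the quartic relation is a point the paper glosses over (it simply asserts agreement with Elkies' basis up to sign), so if anything you are slightly more careful, modulo the small typo $e^{4\pi i\cdot 2/7}$ for $e^{2\pi i\cdot 2/7}$.
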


The coordinates of a minimal weight form for the representation $3_2$ yields the following forms of weight $6$:
\begin{align*}
f_4&=1728^{1/14}\eta^{12}K^{-1/14}\  _{3}F_2(-1/14, 11/42, 25/42; 5/7, 4/7; K) = f_1^3 + 3f_2f_3^2,\\
f_5&= 1728^{-3/14}\eta^{12}K^{3/14}\  \ _{3}F_2(3/14, 23/42, 37/42; 9/7, 6/7; K) = f_2^2f_1 - (1/3)f_3^3,\\
f_6&=1728^{-5/14}\eta^{12}K^{5/14}\ \  _{3}F_2(5/14, 29/42, 43/42; 10/7, 8/7; K)= (3/2)f_1^2f_3 - (1/2)f_2^3. 
\end{align*}
As above, these polynomial identities are equivalent to certain identities between generalized hypergeometric series.
\end{ex}

\appendix
\section{Holomorphic Eisenstein series on $\Gamma(N)$}
\label{a:eis}
This appendix recalls facts and establishes notation concerning Eisenstein series for principal congruence subgroups of $\Gamma$.\ A good reference  is \cite{Sch}.\ Let $N \geq 1$ be an integer, let $a_1, a_2$ be arbitrary integers, and write $^ta = (a_1,a_2)$.\ For integers $k \geq 3$ define
\[
  G_{N,k,a}(\tau) = \left(\frac{N}{2\pi}\right)^2\sum'_{\substack{m_1 \equiv a_1 \pmod{N}\\ m_2 \equiv a_2 \pmod{N}}} (m_1\tau + m_2)^{-k},
\]
where the prime indicates that the term $(m_1,m_2) = (0,0)$ is to be omitted.\ These series converge uniformly on compacta to define holomorphic functions on $\uhp$.\ One has the following relations:
\begin{align*}
  G_{N,k,a} &= (-1)^k G_{N,k,-a},\\
  G_{N,k,a} &= G_{N,k,b} &&\text{if } (a_1,a_2) \equiv (b_1,b_2) \pmod{N},\\
  G_{dN,k,da} &= d^{-k}G_{N,k, a} &&\text{if } d \in \ZZ_{\neq 0}.
\end{align*}
Define $G_{N,k,a}$ to be \emph{primitive} if $\gcd(a_1,a_2,N) = 1$.\ The imprimitive series can be expressed in terms of primitive series of the same level.\ For $k \geq 3$ the span of these primitive series for varying $a_1$ and $a_2$ give a basis for the orthogonal complement, under the Petersson inner-product, of the cusp forms of weight $k$ and level $N$.\ The dimension of their span is equal to the number of cusps for $\Gamma(N)$.

At weight $2$ the story is a little different, as the series above are only conditionally convergent.\ By introducing an auxilliary complex variable and analytically continuing, Hecke was able to find the following Eisenstein series in weight $2$: let
\[
  G_{N,2,a}(\tau) = (2\pi i)^{-1} (\tau - \bar \tau)^{-1} + \sum_{n \geq 0} \alpha_n(N,a) q_N^{n},
\]
where $q_N = e^{2\pi i \tau/N}$ and
\begin{align*}
  \alpha_0(N,a) &= \left(\frac{N}{2\pi}\right)^2\delta\left(\frac{a_1}{N}\right)\sum_{m_2 \equiv a_2 \pmod{N}}' m_2^{-2},\\
  \alpha_n(N,a) &= -\sum_{\substack{m \mid n\\ \frac{n}{m} \equiv a_1 \pmod{N}}}\abs{m}\zeta_N^{a_2m},\quad\quad n \geq 1,
\end{align*}
where $\zeta_N = e^{2\pi i/N}$, and $\delta(a_1/N) = 1$ if $N \mid a_1$ and it is $0$ otherwise.\ These are not holomorphic modular forms, but their differences for varying $a$ are.\ The above formula shows that they are holomorphic in $\uhp$ and at the cusp $i\infty$, and the transformation law
\[
  G_{N,k,a}|_k M = G_{N,k,^tMa}
\]
for all $M \in \Gamma(1)$, yields holomorphy of differences at the remaining cusps.\ As in the case of weight $k \geq 3$, the differences of these series in weight $2$ span the orthocomplement to the cusp forms for $\Gamma(N)$.\ This transformation law also allows one to identify the representation of $\Gamma$ spanned by these Eisenstein series with the permutation representation of $\Gamma$ acting on the cusps of $\Gamma(N)$.\ If $P = a/b$ is a cusp for 
$\Gamma(N)$ with $\gcd(a,b) = 1$, let us write $G_{N,k,P} = G_{N,k,(a,b)^t}$.\ When $N$ and $k$ are fixed we suppress them from the notation.


\end{document}